\definecolor{darkblue}{rgb}{0,0,0.4} 
\newcommand{\ILtikzpic}[2][]{
\vcenter{\hbox{\begin{tikzpicture}[#1]
#2
\end{tikzpicture}}}
}
\newtheorem{thm}{Theorem}[section]
\newtheorem{theorem}[thm]{Theorem}
\newtheorem{corollary}[thm]{Corollary}
\newtheorem{lemma}[thm]{Lemma}
\newtheorem{proposition}[thm]{Proposition}
\newtheorem{conjecture}[thm]{Conjecture}
\newtheorem{question}[thm]{Question}
\theoremstyle{definition}
\newtheorem{answer}[thm]{Answer}
\newtheorem{definition}[thm]{Definition}
\theoremstyle{remark}
\newtheorem{remark}[thm]{Remark}
\numberwithin{equation}{section}
\newcommand{\mc}{\mathcal}
\renewcommand{\theta}{\vartheta}
\newcommand{\abs}[1]{\left\vert#1\right\vert}
\newcommand{\set}[1]{\left\{#1\right\}}
\DeclareMathOperator{\rk}{rk}
\DeclareMathOperator{\lk}{lk}
\newcommand{\de}{\partial}
\newcommand{\Z}{\mathbb{Z}}
\newcommand{\Q}{\mathbb{Q}}
\newcommand{\F}{\mathbb{F}}
\newcommand{\ring}{\mathcal{A}}
\mathchardef\mhyphen="2D
\newcommand{\gr}{\mathbf{gr}}
\newcommand{\w}{\mathbf{w}}
\newcommand{\z}{\mathbf{z}}
\DeclareMathOperator{\Arf}{Arf}
\newcommand{\Alex}{\mathcal{A}}
\newcommand{\Jones}{V}
\newcommand{\J}{V}
\newcommand{\V}{V}
\DeclareMathOperator{\Ord}{Ord}
\DeclareMathOperator{\HF}{HF}
\DeclareMathOperator{\HFK}{HFK}
\DeclareMathOperator{\CFK}{CFK}
\DeclareMathOperator{\Wh}{Wh}
\DeclareMathOperator{\Kh}{Kh}
\newcommand{\HFh}{\widehat{\HF}}
\newcommand{\HFKh}{\widehat{\HFK}}
\newcommand{\HFKm}{\HFK^-}
\newcommand{\rKh}{\widetilde{\Kh}}
\newcommand{\xo}{\mathrm{xo}}
\begin{document}

\title[On the rank of knot homology theories and concordance]{On the rank of knot homology theories\\ and concordance}

\author[Dunfield]{Nathan M. Dunfield}%
\address{Department of Mathematics, University of Illinois at Urbana-Champaign, Urbana, IL 61801, USA}%
\email{\href{mailto:nathan@dunfield.info}{nathan@dunfield.info}}

\author[Gong]{Sherry Gong}%
\address{Department of Mathematics, Texas A \& M University, College Station, TX 77840, USA}%
\email{\href{mailto:sgongli@tamu.edu}{sgongli@tamu.edu}}

\author[Hockenhull]{Thomas Hockenhull}%
\address{Frome, Somerset, UK}%
\email{\href{mailto:thomas.hockenhull@gmail.com}{thomas.hockenhull@gmail.com}}

\author[Marengon]{Marco Marengon}%
\address{Alfr\'ed R\'enyi Institute of Mathematics, Budapest, Hungary}%
\email{\href{mailto:marengon@renyi.hu}{marengon@renyi.hu}}%

\author[Willis]{Michael Willis}
\address{Department of Mathematics, Texas A \& M University, College Station, TX 77840, USA}
\email{\href{mailto:msw188@ucla.edu}{msw188@tamu.edu}}

\begin{abstract}
%
%
%
For a ribbon knot, it is a folk conjecture that the rank of its knot Floer homology must be 1 modulo 8, and another folk conjecture says the same about reduced Khovanov homology.  We give the first counter-examples to both of these folk conjectures, but at the same time present compelling evidence for new conjectures that either of these homologies must have rank congruent to 1 modulo 4 for any ribbon knot.  We prove that each revised conjecture is equivalent to showing that taking the rank of the homology modulo 4 gives a homomorphism of the knot concordance group.  We check the revised conjectures for 2.4~million ribbon knots, and also prove they hold for ribbon knots with fusion number~1.
\end{abstract}

\maketitle

%

\vspace{-1cm}

\tableofcontents

%

\section{Introduction}

Homological invariants of knots have played a prominent role in low-dimensional topology in the last two decades: among them are Khovanov homology \cite{K:categorification} and knot Floer homology \cite{OSz:HFK, R:HFK}.  For each, one can take a graded Euler characteristic and recover a classical polynomial invariant of knots (the Jones polynomial and the Alexander polynomial, respectively).
Moreover, several concordance invariants and genus bounds arise from these homology theories, often via spectral sequences \cite{OSz:tau, R:s, OSSz:Upsilon}.  On the other hand, results on the ranks of these homology theories are elusive, and often focused on determining which knots can exhibit homology of a given rank.  (See for instance \cite{KM:detector} and \cite[Theorem 1.2]{OSz:HFK} for unknot detection results.) One popular folk conjecture, however, concerns the rank modulo 8
of each homology.  Let us denote
\begin{enumerate}
    \item the hat flavour of knot Floer homology with $\F_2$ coefficients by $\HFKh$; and
    \item the reduced Khovanov homology with $\ring$ coefficients by $\rKh_\ring$.
\end{enumerate}
A version of the folk conjecture is:

\begin{conjecture}[1 mod 8 Folk Conjecture]\label{conj:ribbon mod 8}

  
Let $\ring$ be $\F_p$ or $\Q$. Given a ribbon knot $R \subset S^3$, we have:
\begin{enumerate}[(a)]
    \item \label{it:r-HFK8} $\rk\HFKh(R) \equiv 1 \pmod 8$; and
    \item \label{it:r-rKh8} $\rk\rKh_\ring(R) \equiv 1 \pmod 8$.
\end{enumerate}

\end{conjecture}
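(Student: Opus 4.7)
The plan is to exploit the fact that the determinant $\det(R) = |\Delta_R(-1)|$ of a ribbon (hence slice) knot is a perfect odd square via Fox--Milnor, so in particular $\det(R) \equiv 1 \pmod 8$ automatically. One has the standard rank lower bounds from graded Euler characteristics: $\rk \HFKh(R) \geq |\Delta_R(-1)| = \det(R)$ and $\rk \rKh_\ring(R) \geq |V_R(-1)| = \det(R)$. It therefore suffices to prove that the two ``excess'' quantities
\[
    E_{\HFKh}(R) \;:=\; \rk \HFKh(R) - \det(R), \qquad E_{\rKh}(R) \;:=\; \rk \rKh_\ring(R) - \det(R),
\]
which are always non-negative and even, are in fact divisible by $8$ whenever $R$ is ribbon.

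In the ``thin'' sub-families the excesses vanish outright: for a quasi-alternating ribbon knot $\rk \rKh_\ring(R) = \det(R)$, and for an $L$-space ribbon knot $\rk \HFKh(R) = \det(R)$, so the conjecture holds trivially on infinitely many examples (such as two-bridge ribbon knots). I would next try to induct on the fusion number of a chosen ribbon disk $D$. The base case is the unknot, with both ranks equal to $1$. A fusion-number-one ribbon knot is explicitly a band sum of a two-component unlink, so a direct surgery-exact-triangle computation should pin down both ranks mod $8$ in that case.

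For general $R$, the strategy is to encode the ribbon disk as a sequence of saddle and death cobordisms taking $R$ to an unlink, thereby producing mapping-cone decompositions of $\HFKh(R)$ and $\rKh_\ring(R)$ via the unoriented skein exact triangle and the Bar--Natan dotted-cobordism functor, respectively. The hope is that the ribbon condition (monotone decrease in the number of components as one reads the disk from $R$ downward) forces the cancellations between mapping cones to come in packets of eight. A complementary approach is to show directly that $E_{\HFKh} \pmod 8$ and $E_{\rKh} \pmod 8$ descend to homomorphisms out of the smooth concordance group, reducing the conjecture to the assertion that these homomorphisms vanish on the class of the unknot, which is automatic.

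The key obstacle is that all of the structural tools on offer — the $d$-invariant, the Lee and Bar--Natan deformations, involutive refinements, and the various spectral sequences between these homologies — naturally produce constraints modulo $2$ or, with care, modulo $4$, but the jump from mod $4$ to mod $8$ seems to need a new algebraic input sensitive to the ribbon disk itself rather than just the slice genus. Without such an input, the approach plateaus at the mod-$4$ statement, which is consistent with the revised conjecture previewed in the abstract; this is precisely the step where I would expect either a genuinely new idea or, as foreshadowed, a counterexample.
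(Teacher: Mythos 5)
The paper does not prove this statement; it \emph{disproves} it (Theorem~\ref{thm:1 mod 8 false}), by exhibiting explicit ribbon knots whose homology rank is $5 \pmod 8$. You have correctly intuited this: your closing diagnosis that the available structural tools ``plateau at mod $4$'' and that a counterexample is likely is precisely what the paper finds. The paper's ``proof'' is pure computation: four ribbon knots with $18$--$19$ crossings with $\rk\rKh_\Q \equiv 5 \pmod 8$, and eight symmetric unions with $\rk\HFKh \equiv 5 \pmod 8$ (e.g., a symmetric union of a $13$-crossing diagram of $4_1$ with total $\HFKh$ rank $77$). There is no conceptual argument for the falsity; the congruence simply fails.

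Two concrete warnings about your plan, had the conjecture been true. First, your central reduction --- show the ``excess'' $E(R) = \rk H(R) - \det R$ is divisible by $8$ --- is the wrong object to attack: the counterexamples show $E \equiv 4 \pmod 8$ can occur, and there is no general structural reason (short of the full conjecture) why $E$ should be a multiple of $8$. Second, your fusion-number-$1$ base case is genuinely provable (Theorems~\ref{thm:fusion1} and~\ref{thm:fusion1 Kh}), but the paper's argument does not go through a surgery exact triangle. For $\HFKh$ it invokes Hom's decomposition of $\CFK_{\F[U,V]}$ for torsion-order-$1$ ribbon knots into a free rank-$1$ piece plus ``unit boxes'' of rank $4$, each concentrated in a single $\delta$-grading, and then pairs this with Levine's formula $\det K \equiv 4\Arf K + 1 \pmod 8$ and $\Arf K = 0$ for slice knots to force the number of boxes to be even. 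For $\rKh$, the Lipshitz--Sarkar argument splits the Lee complex over $\ring[X]$ for $X$-torsion-order-$1$ knots and again uses a $\delta$-graded Euler characteristic computation against $\det K \equiv 1 \pmod 8$ --- but this only yields mod $4$, not mod $8$. The jump to mod $8$ for Khovanov even in fusion number $1$ is posed as an open question (Question~\ref{qu:Khr fusion number 1}); your proposed skein/mapping-cone induction does not supply the needed control, as Remark~\ref{rem:nohigherTO} makes explicit by exhibiting a hypothetical torsion-order-$2$ summand satisfying all formal symmetries yet breaking the mod-$4$ count. The surviving statement, your mod-$4$ homomorphism-to-the-concordance-group idea, is exactly Conjecture~\ref{conj:mod4}, shown in Theorem~\ref{thm:equivalence} to be equivalent to the mod-$4$ ribbon conjecture via the connected sum formula and the fact that every slice knot becomes ribbon after connect-summing with a ribbon knot.
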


This folk conjecture was motivated in part by attempts to give a categorified version of the Fox-Milnor theorem, which implies that slice knots $K\subset S^3$ have $\det K$ an odd square and hence is $1 \pmod 8$, together with a large body of experimental evidence.  However, we show that the folk conjecture is false:

\begin{theorem}\label{thm:1 mod 8 false}
The 1 mod 8 Folk Conjecture \ref{conj:ribbon mod 8} is false for both $\HFKh$ and for $\rKh_\ring$ with $\ring=\F_p$ for $p \neq 2$ and $\ring = \Q$.
\end{theorem}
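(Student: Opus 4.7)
The plan is to disprove Conjecture~\ref{conj:ribbon mod 8} by exhibiting explicit ribbon knots whose homology rank violates the claimed congruence. Since only one counter-example per flavour is needed, the proof is fundamentally a computer search, and the core task is to make that search both correct (the examples really are ribbon) and powerful enough (the examples really do violate $\equiv 1 \pmod 8$).

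A first observation that guides the search is that if $R$ is thin (in either sense), then $\rk\HFKh(R)$ and $\rk\rKh_\ring(R)$ coincide with $\abs{\det R}$, and for a ribbon knot $\det R$ is already an odd square and thus $\equiv 1 \pmod 8$ by the Fox--Milnor argument. So any counter-example \emph{must} be homologically thick, and one should prioritise knots of low crossing number admitting ribbon disks but whose $\F_2$ Khovanov or Floer homology is already known to be non-thin. Concretely, I would assemble a large library of ribbon knots by enumerating symmetric unions and low-fusion-number band presentations, verifying ribbonness by explicitly exhibiting a sequence of band moves reducing the knot to an unlink (so ribbonness is certified combinatorially, not merely inferred from sliceness).

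For each candidate $R$ in the library, I would then compute $\rk\HFKh(R)$ over $\F_2$ using grid diagrams or the bordered/nice-diagram algorithms, and compute $\rk\rKh_\ring(R)$ for $\ring\in\{\Q,\F_p\}$ using a standard Khovanov implementation (e.g.\ Bar-Natan's \texttt{knotkit} or equivalent). The three coefficient rings must be treated separately, because torsion in integral Khovanov homology makes $\rk\rKh_\Q$ and $\rk\rKh_{\F_p}$ genuinely distinct invariants; in particular, a single knot may refute one case but not another, so the three parts of Theorem~\ref{thm:1 mod 8 false} may require different counter-examples. The output of the search is whichever first knot in the list has $\rk \not\equiv 1 \pmod 8$ for the relevant homology, and the proof of the theorem consists of presenting these knots together with their ribbon disks and the computed ranks.

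The main obstacle is purely computational: the homology computations, particularly $\HFKh$ and $\rKh_\Q$, scale badly with crossing number, and if the $1\bmod 8$ congruence is in fact very common (as suggested by the fact that the conjecture persisted as folklore), one may need to push the search well beyond the range of existing knot tables. A secondary difficulty is avoiding the collapse of the search to thin examples: one needs a generation method that is biased toward homologically thick ribbon knots without sacrificing a certificate of ribbonness. A tertiary concern is robustness---any single counter-example found by machine should be re-verified with an independent implementation, both for the rank computation and for the ribbon disk, since a claim of this form is valuable only insofar as the individual examples are reproducible.
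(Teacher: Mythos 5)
Your plan is essentially the approach the paper takes: enumerate ribbon knots with certified ribbon structure (the authors use a large tabulation of ribbon knots together with symmetric unions with one twisting region), compute $\rk\HFKh$ and $\rk\rKh_\ring$, and exhibit the knots that violate the $1\pmod 8$ congruence. Your observation that a counter-example must be homologically thick is correct, and indeed all the paper's counter-examples turn out to have fusion number two. Of course, as a blind proposal this is only a search strategy; the actual proof content is the explicit list of knots (e.g.\ 18nh\_00159590 for $\rKh$, and several symmetric unions for $\HFKh$) together with their computed ranks. One efficiency you miss: you worry that the cases $\ring=\Q$ and $\ring=\F_p$ might require different counter-examples, but the paper sidesteps this by checking that its $\Q$ counter-examples have only $2$-torsion in integral $\Kh_\Z$, so the Universal Coefficient Theorem gives $\rk\rKh_{\F_p}=\rk\rKh_\Q$ for all $p\neq 2$ simultaneously --- a single verification then disposes of all odd-characteristic fields at once.
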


We prove Theorem \ref{thm:1 mod 8 false} in Section \ref{sec:Computations} by producing explicit ribbon knots whose homology ranks are computed and shown not to be congruent to 1 modulo 8.  These counter-examples were found while conducting extensive experimental testing of Conjecture \ref{conj:ribbon mod 8}.  Within this large dataset, all such counter-examples had rank $5\pmod 8$; we are therefore led to the following weaker form of Conjecture~\ref{conj:ribbon mod 8}:

\begin{conjecture}
\label{conj:ribbon}
Let $\ring$ be $\F_p$ or $\Q$. Given a ribbon knot $R \subset S^3$, we have:
\begin{enumerate}[(a)]
    \item \label{it:r-HFK} $\rk\HFKh(R) \equiv 1 \pmod 4$;
    \item \label{it:r-rKh} $\rk\rKh_\ring(R) \equiv 1 \pmod 4$.
\end{enumerate}
\end{conjecture}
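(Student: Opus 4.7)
The plan is to convert the conjecture into a cleaner algebraic form, prove the simplest family of ribbon knots directly, and then look for new structure on the homology theories that gives a genuine mod-$4$ refinement of the known mod-$2$ rank constraint. For the setup, both $\HFKh$ and $\rKh_\ring$ are multiplicative under connected sum, and their graded Euler characteristics recover the Alexander and Jones polynomials, each of which evaluates to $\pm 1$ at the relevant specialization. This forces $\rk H(K)$ to be odd for every knot $K$, so $\rk H(K) \bmod 4 \in \{1,3\}$ and $K \mapsto \rk H(K) \bmod 4$ is a monoid homomorphism from the knot monoid under $\#$ to the multiplicative group $(\Z/4)^\times$. I would first establish the equivalence of the conjecture with the statement that this monoid map descends to a group homomorphism on the concordance group $\mathcal{C}$, and then work with the slice formulation: $\rk H(L) \equiv 1 \pmod 4$ for every slice knot $L$. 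This reformulation has the advantage of fitting the $4$-dimensional philosophy of concordance and of being naturally compatible with cobordism-induced maps.

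For the fusion-number-one base case, a ribbon knot $R$ arises from a $2$-component unlink by a single band move. I would compute $\HFKh(R)$ and $\rKh_\ring(R)$ using the skein-type exact sequences associated to the saddle: the oriented skein triangle for $\HFKh$ and the unoriented cube-of-resolutions triangle for $\rKh_\ring$. The strategy is to express $\rk H(R)$ as $\rk H(\text{unknot}) = 1$ plus a correction term controlled by the band, and to verify that the correction is divisible by $4$. Even if this case alone does not suggest a general mechanism, it provides non-trivial evidence beyond the computer checks and a concrete template to generalise.

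The main obstacle --- where I expect essentially all of the genuine difficulty to lie --- is producing an algebraic structure that witnesses the mod-$4$ constraint in the general case. The standard concordance invariants extracted from these theories ($\tau$ from $\HFKh$, $s$ from Khovanov homology, $\Upsilon$, and so on) are integer-valued and see none of the mod-$4$ refinement. I would look for: (i) a refined Lee or Bar-Natan-type spectral sequence on Khovanov homology whose higher pages carry a $\Z/4$ obstruction for slice knots, analogous to how Lee's spectral sequence produces $s$; (ii) a non-degenerate pairing on $\HFKh$ coming from a slice disk complement, analogous to the Blanchfield pairing that underlies the classical mod-$8$ congruence for $\det$; or (iii) a branched double cover argument combining the Ozsv\'ath--Szab\'o spectral sequence from $\rKh$ to $\HFh(\Sigma(K))$ with the fact that $\Sigma(L)$ bounds a rational homology $4$-ball whenever $L$ is slice. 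Calibrating any such structure to see precisely the mod-$4$ information --- fine enough to be non-trivial, but coarse enough to hold for \emph{all} slice knots --- is the crux of the problem.
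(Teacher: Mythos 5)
The statement you are attempting to prove is Conjecture~\ref{conj:ribbon}, which the paper leaves \emph{open}; there is no ``paper's own proof'' to compare against. The paper establishes only (i) equivalence with concordance invariance (Theorem~\ref{thm:equivalence}), (ii) the stronger $1 \pmod 8$ statement for torsion-order-$1$ ribbon knots in the $\HFKh$ setting (Theorem~\ref{thm:TO}), and (iii) the $1 \pmod 4$ statement for $X$-torsion-order-$1$ ribbon knots in the $\rKh_\ring$ setting (Theorem~\ref{thm:xo 1 Kh}), together with computational evidence. Your opening reformulation matches Theorem~\ref{thm:equivalence} accurately: multiplicativity under connected sum, oddness of the rank, and the fact that every slice knot becomes ribbon after connect-summing with a ribbon knot, so the conjecture is equivalent to $\rk H \pmod 4$ descending to a homomorphism $\mathcal C \to (\Z/4\Z)^\times$. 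That part is sound and is exactly what the paper does.

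The genuine gap is in both your fusion-number-one step and (as you acknowledge) the general case. Your proposed skein/exact-triangle computation for a single band move cannot on its own deliver a mod-$4$ constraint: an exact triangle relating $R$, the $2$-component unlink, and the third resolution gives a rank identity of the form $\rk H(R) = \rk H(\text{unlink}) - 2\cdot(\text{rank of a map})$, which controls parity but gives no divisibility-by-$4$ of the correction term without further input. The proofs in the paper use qualitatively different, structure-specific inputs. For $\HFKh$, Theorem~\ref{thm:TO} uses the Hom--Kang--Park decomposition of $\CFK_{\F[U,V]}$ for a torsion-order-$1$ ribbon knot into $\F[U,V]$ plus ``unit boxes'' $A_i$, each contributing exactly $4$ to $\rk\HFKh$ and concentrated in a single $\delta$-grading; the $\delta$-graded Euler characteristic equals $\pm\det K$, and Levine's formula $\det K \equiv 4\Arf K + 1 \pmod 8$ with $\Arf K = 0$ forces the number of boxes to be even, giving $1 \pmod 8$. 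For $\rKh_\ring$, Theorem~\ref{thm:xo 1 Kh} (Lipshitz--Sarkar) decomposes the Lee complex over $\ring[X]$ into a free rank-$1$ piece plus summands $\ring[X] \xrightarrow{X} \ring[X]$, each giving a $2$-dimensional contribution in a single $\delta$-grading, and then the constraint $\det K \equiv 1 \pmod 8$ forces the even- and odd-$\delta$ counts to agree mod~$4$. Your idea~(i) is closest in spirit to this Lee-theoretic argument, but notice that even there the mod-$4$ statement is not purely categorified: it is wired through the classical congruence $\det K \equiv 1 \pmod 8$ for ribbon knots, i.e.\ through Fox--Milnor/Levine, not through the spectral sequence alone. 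In sum: your reduction step is correct and matches the paper, your proposed method for the base case would not produce mod~$4$ as stated, and your ``main obstacle'' step is not a proof and remains open in the paper as well.
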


The following conjecture then seems like a generalization of Conjecture~\ref{conj:ribbon}, but in fact Theorem~\ref{thm:equivalence} will show that they are equivalent.

\begin{conjecture}
\label{conj:mod4}
Let $\ring$ be $\F_p$ or $\Q$.
Given a knot $K \subset S^3$, the following numbers are concordance invariants of $K$:
\begin{enumerate}[(a)]
    \item \label{it:m-HFK} $\rk \HFKh(K) \pmod 4$;
    \item \label{it:m-rKh} $\rk\rKh_\ring(K) \pmod 4$.
\end{enumerate}
Thus, there are homomorphisms from the concordance group
\begin{align*}
\rho_{\HFKh} \colon \mc C & \to (\Z/4\Z)^* &
\rho_{\rKh_\ring} \colon \mc C & \to (\Z/4\Z)^* \\
K & \mapsto [\rk \HFKh(K)]_4 &
K & \mapsto [\rk \rKh_\ring(K)]_4
\end{align*}
\end{conjecture}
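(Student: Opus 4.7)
The plan is to reduce both parts of the conjecture to the single assertion
\[
(\star) \qquad \rk H(S) \equiv 1 \pmod 4 \ \text{ for every slice knot } S,
\]
where $H$ stands for either $\HFKh$ or $\rKh_\ring$. Two standard inputs will carry $(\star)$ the rest of the way: the connected-sum formulas
\[
H(K_1 \# K_2) \cong H(K_1) \otimes H(K_2)
\]
over a field $\ring$, which make rank multiplicative under $\#$; and the fact (for the coefficient rings considered) that $\rk H(K)$ is always odd, arising as the $E_\infty$-page of a spectral sequence with one-dimensional abutment ($\HFh(S^3) = \F$ for knot Floer, Lee/Bar--Natan homology of the unknot for reduced Khovanov). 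Since every odd integer is self-inverse modulo $4$, these ingredients suffice.

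Explicitly: if $K_0$ and $K_1$ are concordant then $K_0 \# (-K_1)$ is slice, where $-K_1$ denotes the mirror with reversed orientation. Mirroring acts on $H$ by a grading involution preserving total rank, so $(\star)$ together with K\"unneth yields $\rk H(K_0) \cdot \rk H(K_1) \equiv 1 \pmod 4$; the self-inverse property of odd residues then forces $\rk H(K_0) \equiv \rk H(K_1) \pmod 4$, which is concordance invariance. The homomorphism property $\rho(K_0 \# K_1) = \rho(K_0) \cdot \rho(K_1)$ is immediate from K\"unneth, and the image lands in $(\Z/4\Z)^* = \{1,3\}$ precisely because ranks are odd. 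Thus the entire conjecture collapses to $(\star)$.

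The hard part is $(\star)$. For $\HFKh$ a natural approach is to study Juh\'asz's cobordism map $F_D \colon \HFKh(K) \to \HFKh(U) \cong \F$ attached to a slice disk $D \subset B^4$, and to look for a non-degenerate grading-respecting bilinear pairing on $\HFKh(K)$ with respect to which $F_D$ and its dual cut out a Lagrangian-type decomposition, forcing the rank to be $\equiv 1 \pmod 4$. For $\rKh_\ring$ one would instead use Rasmussen's functorial Khovanov cobordism maps induced by $D$, combined with the Lee or Bar--Natan perturbation whose canonical surviving generator isolates a one-dimensional subspace. An alternative route for $\HFKh$ passes through the double branched cover: $\Sigma_2(S)$ bounds a rational homology $4$-ball, and one could try to combine the rigidity of $d$-invariants with the link surgery formula to extract mod-$4$ constraints on $\HFKh(S)$.

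The principal obstacle, and the reason the conjecture is genuinely subtle, is that the argument must distinguish $\rk \equiv 3 \pmod 4$ from $\rk \equiv 1 \pmod 4$ while \emph{tolerating} $\rk \equiv 5 \pmod 8$, which actually occurs among ribbon (hence slice) knots by Theorem~\ref{thm:1 mod 8 false}. Any categorification of the classical Fox--Milnor argument --- that the determinant of a slice knot is an odd square, and hence $\equiv 1 \pmod 8$ --- delivers only the now-discredited mod-$8$ statement. What is required is a pairing structure strictly finer than a symmetric/symplectic decomposition but strictly coarser than a mod-$8$ rigidity, and identifying such a structure on $\HFKh$ or $\rKh_\ring$ is precisely the step at which current technology appears to fall short.
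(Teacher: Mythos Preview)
The statement you are addressing is a \emph{conjecture}, and the paper does not prove it; it is presented as open. The paper's contribution in this direction is Theorem~\ref{thm:equivalence}, which shows that Conjecture~\ref{conj:mod4} is \emph{equivalent} to Conjecture~\ref{conj:ribbon} (the mod-$4$ statement for ribbon knots). Your reduction to the assertion $(\star)$ is correct and essentially reproduces the paper's proof of that equivalence: both arguments use the K\"unneth formula for $H$ under connected sum together with the oddness of $\rk H$ to show that concordance invariance follows once one knows $\rk H \equiv 1 \pmod 4$ on slice knots. (The paper phrases the target as ribbon knots and then upgrades to slice via the trick that every slice knot becomes ribbon after connect-summing with a suitable ribbon knot; your version goes directly to slice, which is an equivalent endpoint.)

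However, what you have written is not a proof, because you do not establish $(\star)$. Your third and fourth paragraphs sketch possible strategies---cobordism maps, Lagrangian-type pairings, the double branched cover---and then explicitly concede that ``current technology appears to fall short.'' That is an accurate assessment: $(\star)$ is exactly the open content of the conjecture, and none of the approaches you list is carried through. The paper does not prove $(\star)$ either; it offers computational evidence (Section~\ref{sec:Computations}) and proves the special cases of fusion number~$1$ (Theorems~\ref{thm:fusion1} and~\ref{thm:fusion1 Kh}). So your proposal is a correct reformulation followed by a candid survey of obstacles, but it is not a proof of the conjecture, and there is no proof in the paper for it to be compared against.
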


\begin{theorem}
\label{thm:equivalence}
  
We have the following equivalences:
\begin{enumerate}[(a)]
    \item Conjecture \ref{conj:mod4}\ref{it:m-HFK} $\Leftrightarrow$ Conjecture \ref{conj:ribbon}\ref{it:r-HFK};
    \item Conjecture \ref{conj:mod4}\ref{it:m-rKh} $\Leftrightarrow$ Conjecture \ref{conj:ribbon}\ref{it:r-rKh}.
\end{enumerate}
\end{theorem}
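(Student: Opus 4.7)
The plan is to prove each direction separately, writing $H$ for either $\HFKh$ (with $\F_2$ coefficients) or $\rKh_\ring$ (with field coefficients $\ring$) so that the two parts can be handled uniformly. The key structural facts I would use in both cases are: (i) the K\"unneth formula $\rk H(K_1 \# K_2) = \rk H(K_1) \cdot \rk H(K_2)$; (ii) the mirror-invariance $\rk H(-K) = \rk H(K)$; (iii) $\rk H(U) = 1$ for the unknot $U$; and (iv) the classical observation that $K \# (-K)$ is always a ribbon knot, where $-K$ denotes the mirror-reverse.

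For the direction Conjecture \ref{conj:mod4} $\Rightarrow$ Conjecture \ref{conj:ribbon}, the argument is direct: every ribbon knot is slice and hence null-concordant, so it lies in the kernel of the hypothesized homomorphism $\rho_H \colon \mc C \to (\Z/4\Z)^*$; thus $[\rk H(R)]_4 = \rho_H([R]) = \rho_H([U]) = [\rk H(U)]_4 = 1$, giving $\rk H(R) \equiv 1 \pmod 4$.

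For the converse direction, assuming Conjecture \ref{conj:ribbon}, I would first establish that $\rk H(K)$ is odd for every knot $K$ by applying the ribbon hypothesis to $K \# (-K)$: combined with (i) and (ii) this yields $(\rk H(K))^2 \equiv 1 \pmod 4$, forcing $\rk H(K)$ to be odd. This makes $K \mapsto [\rk H(K)]_4$ a well-defined monoid homomorphism $\rho \colon (\mathrm{Knots}, \#) \to (\Z/4\Z)^*$, multiplicative by (i). It then remains to show $\rho$ descends to $\mc C$—equivalently, $\rho(S) = 1$ for every slice knot $S$.

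The latter is the heart of the argument and the main obstacle, since the ribbon hypothesis a priori controls only ribbon knots, while $(\Z/4\Z)^* = \{1,3\}$ leaves room for a slice but not ribbon knot to map to $3$. Given two concordant knots $K_1 \sim K_2$, the knot $S = K_1 \# (-K_2)$ is slice but need not be ribbon. One natural strategy is to produce ribbon knots $R_1, R_2$ satisfying a knot-theoretic identity such as $K_1 \# R_1 = K_2 \# R_2$, from which multiplicativity and the ribbon hypothesis would immediately give $\rk H(K_1) \equiv \rk H(K_2) \pmod 4$. Finding such a bridge, without invoking the (open) slice-ribbon conjecture which would trivially close the gap, is the delicate step of the proof.
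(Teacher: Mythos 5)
Your forward direction and the framework for the converse match the paper's proof, and your derivation that $\rk H(K)$ is odd from the ribbon hypothesis applied to $K \# (-K)$ is valid (though it can also be obtained for free from the graded Euler characteristic, since $\det K$ is always odd). However, there is a genuine gap exactly where you flag it: you do not show that the monoid homomorphism $\rho$ descends to $\mc C$, i.e.\ that $\rk H(S) \equiv 1 \pmod 4$ for every slice knot $S$, and you explicitly say you do not see how to close this without invoking slice--ribbon. The missing ingredient is a standard elementary fact, not an open problem: for every slice knot $S$ there exists a ribbon knot $R$ such that $S \# R$ is again ribbon (the paper cites \cite[Lemma 2.5]{T:slice} for this; the point is that a slice disc can be converted into a ribbon disc after stabilising by a ribbon knot). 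Granting this, multiplicativity gives $\rk H(S)\cdot\rk H(R) = \rk H(S\#R) \equiv 1 \pmod 4$, and since $\rk H(R) \equiv 1 \pmod 4$ by the ribbon hypothesis, one concludes $\rk H(S) \equiv 1 \pmod 4$. This is precisely the ``bridge'' you were searching for, in the special case $K_2 = U$, and it is the only substantive step your proposal is missing; as written the argument is incomplete.
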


In particular, the word ``ribbon'' in Conjecture \ref{conj:ribbon} can be replaced by the words ``smoothly slice''.

We note that none of our data produced a counter-example for the Folk Conjecture \ref{conj:ribbon mod 8}\ref{it:r-rKh8} over $\ring=\F_2$, leading to:

\begin{question}\label{qu:Kh 1 mod 8 over F2}
    Is the Folk Conjecture \ref{conj:ribbon mod 8}\ref{it:r-rKh8} true with coefficients $\ring=\F_2$?
\end{question}

\subsection{Evidence for our conjectures}

We offer two kinds of evidence for our conjectures.

\subsubsection{Empirical evidence}
\label{sec:empirical}
We tested Conjecture \ref{conj:ribbon} on a list of 1.6 million ribbon knots of crossing number at most 19 from \cite{DunfieldGong2023}. We also generated 800,000 knots of up to 30 crossings by taking symmetric unions (a special family of ribbon knots defined by Lamm \cite{Lamm:SymUnions}, building on a construction of Kinoshita-Terasaka \cite{KT:SymUnions}; see Figure \ref{fig:gen sym union}) with one twisting region. For the combined sample of 2.4 million ribbon knots, we found Conjecture \ref{conj:ribbon}\ref{it:r-HFK} holds for them using Szab\'o's knot Floer calculator \cite{Sz:calculator} through SnapPy \cite{SnapPy}.  Conjecture~\ref{conj:ribbon}\ref{it:r-rKh} also holds for these knots  and with $\ring = \F_2,\F_3,$ and $\F_{211}$ (see Remark \ref{rmk:base fields}). This was tested using KnotJob \cite{KnotJob}.

\begin{figure}
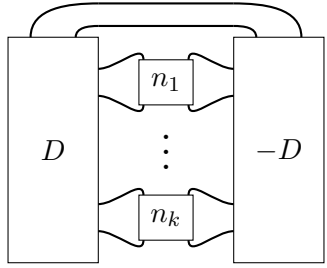

\[
\ILtikzpic[scale=.3]{
\draw (-1.2,2) rectangle (1.2,4);
\node at (0,3) {$n_1$};
\node[scale=1.4] at (0,.4) {$\vdots$};
\draw (-1.2,-2) rectangle (1.2,-4);
\node at (0,-3) {$n_k$};
\draw[thick] (1,2) to[out=-90,in=180] (3,2.4);
\draw[thick] (-1,2) to[out=-90,in=0] (-3,2.4);
\draw[thick] (1,4) to[out=90,in=180] (3,3.6);
\draw[thick] (-1,4) to[out=90,in=0] (-3,3.6);
\draw[thick] (1,-2) to[out=90,in=180] (3,-2.4);
\draw[thick] (-1,-2) to[out=90,in=0] (-3,-2.4);
\draw[thick] (1,-4) to[out=-90,in=180] (3,-3.6);
\draw[thick] (-1,-4) to[out=-90,in=0] (-3,-3.6);
\draw (3,-5) rectangle (7,5);
\draw (-3,-5) rectangle (-7,5);
\node at (5,0) {$-D$};
\node at (-5,0) {$D$};
\draw[thick] (4,5) to[out=90,in=0] (3,5.5);
\draw[thick] (6,5) to[out=90,in=0] (3,6.5);
\draw[thick] (-4,5) to[out=90,in=180] (-3,5.5);
\draw[thick] (-6,5) to[out=90,in=180] (-3,6.5);
\draw[thick] (-3,5.5)--(3,5.5);
\draw[thick] (-3,6.5)--(3,6.5);
}
\]
    \caption{A symmetric union of a knot diagram $D$.  The various $n_i$ indicate twisting regions with $n_i$ (signed) crossings concatenated vertically.  All symmetric unions are ribbon; the converse is currently unknown.}
    \label{fig:gen sym union}
\end{figure}

\subsubsection{Fusion number 1 evidence}
Building on the work of Hom-Kang-Park \cite{HKP:ribbon}, we prove that the stronger 1 mod 8 Folk Conjecture \ref{conj:ribbon mod 8}\ref{it:r-HFK} holds for ribbon knots with fusion number $1$ (i.e.\ those knots obtained by attaching a band to a 2-component unlink).

\begin{theorem}
\label{thm:fusion1}
If $R$ is a ribbon knot with fusion number $1$, then
\[
\rk \HFKh(R) \equiv 1 \pmod8,
\]
in particular, $R$ satisfies Conjecture \ref{conj:ribbon}\ref{it:r-HFK}.
\end{theorem}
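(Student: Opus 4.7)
The plan is to combine the Hom--Kang--Park structural analysis of fusion-number-one ribbon knots with the elementary parity observation that every odd integer squared is $\equiv 1 \pmod 8$: writing $n = 2m+1$, we have $(2m+1)^2 = 4m(m+1) + 1$, and $m(m+1)$ is always even. The strategy is therefore to realize $\rk \HFKh(R)$ as the square of an odd integer.

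\emph{Step 1 (Structure from [HKP:ribbon]).} Since $R$ has fusion number one, $R$ is obtained from a $2$-component unlink by a single band move, giving rise to a ribbon disk in $B^4$ with one saddle critical point and two local minima. The Hom--Kang--Park analysis of this configuration supplies an auxiliary knot $K$ — arising naturally from the fusion band — together with a filtered chain homotopy equivalence of knot Floer chain complexes
\[
\CFK^{\infty}(R) \simeq \CFK^{\infty}(K \# \m{K}),
\]
from which we read off an isomorphism of bigraded vector spaces $\HFKh(R) \cong \HFKh(K \# \m{K})$. Informally, the $K \# \m{K}$ form reflects the symmetry built into the fusion band, which glues two mirror copies of a basic core piece.

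\emph{Step 2 (Connected sums and mirrors).} By the K\"unneth formula for $\HFKh$ with $\F_2$ coefficients,
\[
\HFKh(K \# \m{K}) \cong \HFKh(K) \otimes_{\F_2} \HFKh(\m{K}).
\]
Mirroring reverses bigradings but preserves total rank, so $\rk \HFKh(R) = (\rk \HFKh(K))^2$.

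\emph{Step 3 (Parity of the knot Floer rank).} For every knot $K$, $\rk \HFKh(K)$ is odd: the Alexander-filtration spectral sequence from $\HFKh(K)$ converges to $\HFh(S^3) \cong \F_2$, and each higher differential cancels generators in pairs, so the rank drops by an even integer. Combining Steps 1--3, $\rk \HFKh(R)$ is an odd square and hence $\equiv 1 \pmod 8$, and in particular $\equiv 1 \pmod 4$, establishing Conjecture~\ref{conj:ribbon}\ref{it:r-HFK}.

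\emph{Expected main obstacle.} The genuine content sits in Step 1: we must extract a rank-preserving chain-level identification of $\CFK^{\infty}(R)$ with $\CFK^{\infty}(K \# \m{K})$ from the Hom--Kang--Park framework. If their result yields only a weaker equivalence (for instance a local or $\nu^+$-equivalence, which is insufficient to determine $\rk \HFKh$), the gap must be bridged by showing that any residual chain-level discrepancy in the fusion-number-one setting splits off as a palindromically symmetric, even-rank summand that does not disturb the mod $8$ count. Carefully tracking the effect of the single band move — using the structure of the ribbon disk with two minima and one saddle — is where the serious work is concentrated.
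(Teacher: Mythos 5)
Your Step 1 is the weak point, and it is not merely a gap to be patched: the claimed structure is not what Hom--Kang--Park prove, and it is too strong to be true. What \cite[Proposition~3.6]{HKP:ribbon} gives (combined with \cite[Corollary~1.7]{JMZ:ribbon} to pass from fusion number $1$ to torsion order $1$) is a direct-sum decomposition
\[
\CFK_{\F[U,V]}(R) \simeq \F[U,V] \oplus A_1 \oplus \cdots \oplus A_\ell,
\]
where each $A_i$ is a ``unit box'': an acyclic, $\delta$-homogeneous, rank-$4$ free $\F[U,V]$-summand. This yields $\rk \HFKh(R) = 1 + 4\ell \equiv 1 \pmod 4$, but nothing about it forces the rank to be an odd square, and no auxiliary knot $K$ with $\CFK^\infty(R) \simeq \CFK^\infty(K \# \m{K})$ is produced or implied. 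Such an identification would recover precisely the na\"{\i}ve categorified Fox--Milnor statement $\HFKh(R) \cong V \otimes V^*$, which the paper's own background discussion explains is false in general (the Kinoshita--Terasaka knot has $\rk \HFKh = 33$, which is $1\pmod 8$ but not an odd square). So there is no $K$ to take a square of, and Steps~2 and~3, while individually correct, rest on a premise that does not hold.

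The fix is also not the one your ``expected obstacle'' paragraph envisages: no amount of tracking the single band move will conjure the $K\#\m{K}$ form, and an even-rank leftover summand would not by itself control the count mod~$8$. The actual passage from mod~$4$ to mod~$8$ is number-theoretic rather than chain-level. Because each unit box is $\delta$-homogeneous, the $\delta$-graded Euler characteristic of $\HFKh(R)$ computes $\pm\det R$ and gives $\det R \equiv 4\ell \pm 1 \pmod 8$. Levine's formula $\det R \equiv 4\Arf R + 1 \pmod 8$ (Theorem~\ref{thm:Levine}) together with $\Arf R = 0$ for the slice knot $R$ then forces $\ell$ to be even, so $\rk \HFKh(R) = 1 + 4\ell \equiv 1 \pmod 8$. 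This Arf-invariant input is the missing idea in your proposal.
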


The result holds more generally for ribbon knots with torsion order $1$ (cf.\ Theorem \ref{thm:TO}). The key ingredient of the proof is Hom's decomposition for $\CFK^\infty$ of a slice knot (cf.\ \cite[Theorem 1]{H:survey}).

Meanwhile for Khovanov homology, in unpublished work Robert Lipshitz and Sucharit Sarkar showed the following, whose proof we include in Section~\ref{sec:Kh xo 1} with their permission.

\begin{theorem}[Lipshitz-Sarkar]
\label{thm:fusion1 Kh}
If $R$ is a ribbon knot with fusion number 1 and $\ring=\Q$ or $\F_p$ for $p\neq 2$, then
\[\rk \rKh_\ring(R)\equiv 1 \pmod 4,\]
satisfying Conjecture \ref{conj:ribbon}\ref{it:r-rKh}.
\end{theorem}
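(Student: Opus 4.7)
The plan is to mirror the strategy of Theorem~\ref{thm:fusion1}: the Floer-side argument there rests on Hom's decomposition of $\CFK^\infty$ for slice knots, and on the Khovanov side the natural substitute is the Lee (equivalently, Bar-Natan) deformation, combined with the cobordism maps induced by the ribbon disk. First I would fix the parity. Over $\ring = \Q$ or $\F_p$ with $p \neq 2$, Lee's theorem gives that the reduced Lee homology of any knot is one-dimensional, so the Lee spectral sequence pairs up all but one generator of $\rKh_\ring(R)$ and produces
\[
\rk \rKh_\ring(R) = 1 + 2c
\]
for some non-negative integer $c$. The problem thus reduces to showing $c$ is even.

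Next I would unpack the fusion-$1$ hypothesis. By definition, $R$ bounds a ribbon disk $D$ which, read as a movie, consists of two births followed by a single saddle $\emptyset \to U_1 \to U_2 \to R$. This yields a saddle cobordism $\sigma^* : R \to U_2$ and its dual $\sigma : U_2 \to R$. The reduced Khovanov group $\rKh_\ring(U_2) \cong \ring^2$ is concretely generated by the two canonical Lee/Bar-Natan classes indexed by the orientations of the unlink, and the composition $\sigma \circ \sigma^* : R \to R$ is a connected genus-$1$ cobordism (two pairs of pants glued along two circles) whose induced map is nonzero and explicit in the Bar-Natan deformation. Capping off one component of $U_2$ with a disk further exhibits a sub-cobordism $U_1 \to R$, and I would appeal to the Khovanov analogue of Zemke-type ribbon-concordance injectivity results to identify the surviving Lee class of $R$ with the image of $\rKh_\ring(U_1)$ under this sub-cobordism; this pins down the splitting $\rKh_\ring(R) \cong \ring \oplus C$ with $\rk C = 2c$.

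The heart of the argument---and the step I expect to be the main obstacle---is producing a fixed-point-free involution, or equivalent pairing, on the cancelling summand $C$, so as to force $c$ even. The natural candidate is the involution swapping the two canonical Lee classes on $U_2$, transported to $\rKh_\ring(R)$ via $\sigma$ and $\sigma^*$. One would then verify that this involution preserves the splitting $\ring \oplus C$, acts as the identity on the surviving Lee factor, and acts freely on $C$, uniformly across $\ring = \Q$ and all odd-characteristic $\F_p$. This is the Khovanov analogue of the parity symmetry that drives Hom's $\CFK^\infty$ decomposition on the Floer side, and carrying it out should reduce to a concrete computation on the Bar-Natan complex using the known explicit action of the genus-$1$ cobordism $\sigma \circ \sigma^*$; the potential subtlety is that the two canonical classes on $U_2$ interact nontrivially with the basepoint data needed to define reduced Khovanov, so the freeness check may require care in how the basepoint on $R$ is transported along the ribbon disk.
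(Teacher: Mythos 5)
Your setup is correct up to the point where you reduce to ``$c$ is even,'' but the mechanism you propose for that step is both unproven and, in the form stated, not well-defined, and the paper's actual proof takes a genuinely different route.

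The concrete gap is the fixed-point-free involution. You never give a reason why it should be free, and you flag this yourself as the main obstacle; but more seriously, the candidate involution you propose is not even obviously a self-map of $\rKh_\ring(R)$. The swap of the two canonical Lee generators lives on $\rKh_\ring(U_2)$, and $\sigma, \sigma^*$ are maps between $\rKh_\ring(U_2)$ and $\rKh_\ring(R)$, not identifications; to ``transport'' an involution across them you would need $\sigma$ to be an isomorphism, which it certainly is not. Moreover, in reduced Khovanov homology the two generators of $\rKh_\ring(U_2)$ sit in \emph{different} quantum gradings, so the swap is not even a graded map, and a pairing argument would need it to respect the structure that produces the splitting $\ring \oplus C$. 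None of this is supplied, and I do not see how to supply it.

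The paper's proof (Theorem~\ref{thm:xo 1 Kh}) avoids involutions entirely and is a numerical argument. The fusion-number-1 hypothesis enters via Sarkar--Scaduto--Stoffregen \cite{SS:xo}, giving $X$-torsion order $1$ in the Bar--Natan/Lee deformation over $\ring[X]$. Writing $LC(K)$ as $\ring[X]^r \oplus \bigoplus_i(\ring[X] \xrightarrow{X^{a_i}} \ring[X])$, torsion order $1$ forces $r = a_i = 1$ for all $i$, and---this is the key structural point your outline is missing---it forces each resulting two-dimensional summand of $\rKh_\ring(K)$ to lie in a \emph{single} $\delta$-grading $\delta = q/2 - h$. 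One then takes the $\delta$-graded Euler characteristic, which recovers $\det K$, and uses Fox--Milnor ($\det K$ an odd square, so $\det K \equiv 1 \pmod 8$ for a slice knot) to get $1 + 2k_e - 2k_o \equiv 1 \pmod 8$, hence $k_e \equiv k_o \pmod 4$, hence $\rk \rKh_\ring(K) = 1 + 2k_e + 2k_o \equiv 1 \pmod 4$. The evenness of $c$ that you want does not come from a geometric symmetry; it comes from the determinant congruence combined with the $\delta$-grading constraint that the torsion-order-$1$ hypothesis provides. I would suggest abandoning the involution idea and instead making the $\delta$-grading/Euler-characteristic argument explicit.
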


Although neither of the proofs of Theorems \ref{thm:fusion1} and \ref{thm:fusion1 Kh} seem to generalize to give a statement about Khovanov homology mod 8, it is perhaps conspicuous that every counter-example to the 1 mod 8 Folk Conjecture \ref{conj:ribbon mod 8} that we have found is fusion number 2.  In analogy with Theorem \ref{thm:fusion1}, we pose the following as a question.

\begin{question}\label{qu:Khr fusion number 1}
If $R$ is a ribbon knot with fusion number $1$, must we have
\[
\rk \rKh_\ring(R) \equiv 1 \pmod8
\]
for some (all?) $\ring$?
\end{question}

\subsection{Comparisons with some known slice obstructions}

\subsubsection{The $s$-invariant} If Conjecture~\ref{conj:mod4}\ref{it:r-rKh} holds, it would give the obstruction that any knot with $\rk \rKh_\ring \not\equiv 1 \pmod 4$ is not smoothly slice. It is natural to ask how this obstruction compares to existing techniques, especially the Rasmussen $s$-invariant \cite{R:s}, which derives from the same underlying knot homology theory.  Among the 3.8 million knots considered in \cite{DunfieldGong2023}, for $\ring = \Q$, there are knots where $s \neq 0$ and $\rk \rKh_\ring \equiv 1 \pmod 4$ and others where $s = 0$ and $\rk \rKh_\ring \not\equiv 1 \pmod 4$, showing that neither condition implies the other.  However, for more than 99\% of these knots, the two tests agree, with $s \neq 0$ occurring slightly more frequently than $\rk \rKh_\ring \not\equiv 1 \pmod 4$.  Nearly all knots in this sample have $\abs{s} \leq 2$; as $s$ grows, we expect the $s \neq 0$ test to be generically the stronger of the two.

The $s$-invariant is a particularly subtle concordance invariant that gives strong information about the smooth 4-ball genus.  It can be used to show certain topologically slice knots are not smoothly slice, for example the Piccirillo knot $K_P$ from \cite{P:Conway}, a fact which was leveraged there to show the Conway knot is not smoothly slice.  Using \cite{LL:Khoca}, we found that $\rk \rKh_{\F_3}(K_P) = \rk \rKh_\Q(K_P) = 263 \equiv 3 \pmod 4$.  Thus, if Conjecture~\ref{conj:mod4}\ref{it:r-rKh} holds, the resulting obstruction would be smooth rather than topological in nature and sensitive enough to handle $K_P$.  It could thus be potentially applied in place of the $s$-invariant in the strategy of \cite{FGMW:Poincare} and \cite{MP:Poincare} to produce counterexamples to the smooth 4-dimensional Poincar\'e conjecture.

\subsubsection{The $\tau$-invariant}

Similarly, if Conjecture~\ref{conj:mod4}\ref{it:r-HFK} holds, it would give the obstruction that any knot with $\rk \HFKh \not\equiv 1 \pmod 4$ is not smoothly slice.  Here, a natural point of comparison is the $\tau$ invariant from \cite{OSz:tau}, which is also defined in terms of $\HFKh$.  For the sample of \cite{DunfieldGong2023}, we again found that this test would match the condition $\tau \neq 0$ for 99\% of these knots, though it is also not the case that either test would subsume the other.  It is known that $\tau$ is unsuitable for applications such as \cite{P:Conway}, and so it is unsurprising that the Piccirillo knot has $\rk \HFKh(K_p) = 33 \equiv 1 \pmod 4$.  However, as with $\tau$, it is the case that $\rk \HFKh \not\equiv 1 \pmod 4$ would be a genuine smooth slice obstruction. For example, there are almost 6,000 knots in \cite{DunfieldGong2023} with trivial Alexander polynomial (and hence topologically slice \cite{F:Poincare}) but where $\tau \neq 0$ and $\rk \HFKh \equiv 3 \pmod 4$.  We give infinitely many examples along these lines in Section~\ref{sec:Whitehead}.




\subsubsection{Knots with order $2$ in the concordance group}
If either part of Conjecture \ref{conj:mod4} holds, it would give a sliceness obstruction amenable to detecting 2-torsion in the concordance group.
While there already exist obstructions that detect 2-torsion (for example, the Fox-Milnor obstruction \cite{FM:Fox-Milnor}), 2-torsion cannot be detected with homomorphisms $\mathcal C \to \Z$, which typically arise in the context of knot homologies.

\subsection{Non-locality}
\label{sec:intro non-locality}

A subtlety of our conjecture is that there cannot exist a local proof, i.e.\ a proof using only a projection of the knot near a ribbon singularity of an immersed ribbon disc.
We show this in Section \ref{sec:non-locality}; for a precise statement, see Proposition \ref{prop:non-locality}.

\subsection{Generalisations}

\subsubsection{Unreduced Khovanov homology}
For the case of unreduced Khovanov homology $\Kh_\ring$, we have corresponding conjectures.
\begin{conjecture}
\label{conj:ribbon unreduced}
Let $\ring$ be either $\Q$ or $\F_p$ for $p\neq 2$.  Given a ribbon knot $R\subset S^3$, we have
\[\rk\Kh_\ring(R) \equiv 2 \pmod 4.\]
\end{conjecture}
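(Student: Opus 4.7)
The plan is to reduce Conjecture~\ref{conj:ribbon unreduced} to two well-known structural facts about (reduced) Khovanov homology. The first is Shumakovitch's splitting: when $1/2 \in \ring$, there is an isomorphism
\[
\Kh_\ring(K) \cong \rKh_\ring(K) \otimes_\ring V
\]
for any knot $K$, where $V \cong \Kh_\ring(\text{unknot})$ is a free $\ring$-module of rank $2$. This splitting holds precisely because $\ring$ contains $\tfrac{1}{2}$, which is why $p = 2$ is excluded from the hypothesis. Taking ranks gives $\rk \Kh_\ring(K) = 2 \rk \rKh_\ring(K)$.

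The second ingredient I would invoke is that $\rk \rKh_\ring(K)$ is odd for every knot $K$. This follows from the graded Euler characteristic $\chi(\rKh_\ring(K))(q) = V_K(q)$, the normalized Jones polynomial, together with $V_K(1) = 1$ for any knot. Since the total rank of a chain complex agrees modulo $2$ with its Euler characteristic (each cancelling pair changes both by even amounts), $\rk \rKh_\ring(K)$ is odd. Combining with Shumakovitch's splitting, $\rk \Kh_\ring(K) = 2 \cdot (\text{odd}) \equiv 2 \pmod 4$ for \emph{every} knot $K$.

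So in fact there is no real obstacle, and the ``ribbon'' hypothesis is not used: Conjecture~\ref{conj:ribbon unreduced} as stated holds for all knots. The genuinely interesting strengthening would be
\[
\rk \Kh_\ring(R) \equiv 2 \pmod 8 \qquad \text{for $R$ ribbon},
\]
which, via the same Shumakovitch splitting, is equivalent to Conjecture~\ref{conj:ribbon}\ref{it:r-rKh}. The hard part, then, is not the conjecture as written but its mod $8$ refinement, whose difficulty is exactly that of Conjecture~\ref{conj:ribbon}\ref{it:r-rKh}; for that strengthening I would expect an approach parallel to Theorems~\ref{thm:equivalence} and \ref{thm:fusion1 Kh} to be the most promising line of attack.
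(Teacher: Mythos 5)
There is a fundamental error here: Shumakovitch's splitting $\Kh_\ring(K)\cong \rKh_\ring(K)\otimes V$ holds over $\ring = \F_2$, not over rings containing $\tfrac12$ --- you have the hypothesis exactly backwards. This is the content of \cite[Corollary~3.2.C]{S:torsion}, and it is precisely why the paper \emph{excludes} $p=2$ from Conjecture~\ref{conj:ribbon unreduced}: as explained in Remark~\ref{rmk:Kh over F2}, over $\F_2$ one does get $\rk\Kh_{\F_2}(K)=2\rk\rKh_{\F_2}(K)\equiv 2\pmod 4$ for \emph{every} knot, so the $\F_2$ version of the statement is trivially true and uninteresting. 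Over $\Q$ or $\F_p$ with $p\neq 2$ the long exact sequence relating reduced and unreduced Khovanov homology does not split in general, and your claimed tensor decomposition fails. For a concrete counterexample to your conclusion, the right-handed trefoil has $\rk\Kh_\Q(T_{2,3})=4\equiv 0\pmod 4$, whereas $\rk\rKh_\Q(T_{2,3})=3$; the relation $\rk\Kh_\Q=2\rk\rKh_\Q$ already fails here, and so does the assertion that $\rk\Kh_\Q(K)\equiv 2\pmod 4$ for all knots.

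Two further downstream claims are also wrong. First, Conjecture~\ref{conj:ribbon unreduced} is genuinely a conjecture in the paper, not something deduced trivially --- if the statement held for all knots there would be nothing to conjecture. The fact that the trefoil violates it shows the ``ribbon'' hypothesis must really be doing work. Second, you propose the mod~$8$ refinement $\rk\Kh_\ring(R)\equiv 2\pmod 8$ as the genuinely interesting version and claim (via the nonexistent splitting) that it is equivalent to Conjecture~\ref{conj:ribbon}\ref{it:r-rKh}; but the paper explicitly records that this mod~$8$ statement is \emph{false}, with $6_1$ having $\rk\Kh_\Q\equiv 2\pmod 8$ and $K16n33000$ having $\rk\Kh_\Q\equiv 6\pmod 8$. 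Indeed, the paper even notes that it is unknown whether Conjectures~\ref{conj:ribbon unreduced} and~\ref{conj:mod4 unreduced} are equivalent, precisely because the reduced/unreduced relationship is not as clean away from characteristic~$2$ as your proposal assumes.
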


\begin{conjecture}
\label{conj:mod4 unreduced}
Let $\ring$ be either $\Q$ or $\F_p$ for $p\neq 2$.  Given a knot $K\subset S^3$, the value
\[\rk \Kh_\ring(K) \pmod 4\]
is a concordance invariant of $K$.
\end{conjecture}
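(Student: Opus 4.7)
The plan is to mimic the strategy of Theorem~\ref{thm:equivalence}, establishing the equivalence of Conjectures~\ref{conj:ribbon unreduced} and~\ref{conj:mod4 unreduced}; this would reduce the concordance-invariance statement for $\rk \Kh_\ring \pmod 4$ to the ribbon statement for unreduced Khovanov homology.

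The direction ``concordance invariance $\Rightarrow$ ribbon statement'' is immediate: any ribbon $R$ is concordance-equivalent to the unknot, so $\rk \Kh_\ring(R) \equiv \rk \Kh_\ring(\text{unknot}) = 2 \pmod 4$.

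The reverse direction is subtler than in the reduced case, because $\rk \Kh_\ring$ does not obey a clean multiplicativity under connect sum analogous to the identity $\rk \rKh_\ring(K_1 \# K_2) = \rk \rKh_\ring(K_1) \cdot \rk \rKh_\ring(K_2)$ that drives Theorem~\ref{thm:equivalence}. One way forward is to pass through the (grading-shifted) chain-level short exact sequence
\[
0 \to \rKh_\ring(K)\{-1\} \to \Kh_\ring(K) \to \rKh_\ring(K)\{+1\} \to 0,
\]
whose associated long exact sequence on homology yields the rank identity
\[
\rk \Kh_\ring(K) \;=\; 2 \rk \rKh_\ring(K) - 2 d(K),
\]
where $d(K)$ denotes the total rank of the connecting homomorphism. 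Since the graded Euler characteristic of $\rKh_\ring(K)$ is the normalised Jones polynomial $V(K)(q)$ and $V(K)(1) = 1$ for every knot, $\rk \rKh_\ring(K)$ is always odd. Hence $\rk \Kh_\ring(K) \equiv 2 \pmod 4$ is equivalent to $d(K)$ being even, and---granting Conjecture~\ref{conj:ribbon}\ref{it:r-rKh} for the reduced case---proving Conjecture~\ref{conj:mod4 unreduced} reduces to showing that $d(K) \pmod 2$ is itself a concordance invariant.

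The central obstacle is exactly this parity invariance of $d(K)$: I see no immediate topological interpretation of $d(K) \pmod 2$, nor an obvious connect-sum formula controlling it. An alternative approach would be to try to establish Conjecture~\ref{conj:ribbon unreduced} directly by adapting the fusion-number-$1$ arguments of Hom-Kang-Park and Lipshitz-Sarkar underlying Theorems~\ref{thm:fusion1} and~\ref{thm:fusion1 Kh}, tracking the Khovanov cobordism induced by a ribbon disc. However, this route still demands new input to control how the connecting map changes across a ribbon move, so it is arguably the same obstacle in disguise.
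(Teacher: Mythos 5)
This statement is a conjecture, not a theorem: the paper offers no proof of it, and indeed remarks explicitly, immediately after stating it, that ``we do not know if these conjectures are equivalent,'' referring to Conjecture~\ref{conj:ribbon unreduced} and Conjecture~\ref{conj:mod4 unreduced}. So there is no paper proof to compare against, and your proposal should be judged as an analysis of the difficulty rather than as a candidate proof.

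Your diagnosis of the obstruction is accurate and matches the paper's implicit reasoning: the argument of Theorem~\ref{thm:equivalence} hinges on multiplicativity of $\rk\rKh_\ring$ under connected sum, and $\rk\Kh_\ring$ satisfies no such clean product formula, so the ``ribbon $\Rightarrow$ concordance invariant'' direction does not transport. Your reformulation via the Shumakovitch exact sequence and the rank identity $\rk\Kh_\ring(K) = 2\rk\rKh_\ring(K) - 2d(K)$ is correct and is a genuinely useful repackaging: since $\rk\rKh_\ring(K)$ is always odd, the unreduced conjecture is equivalent (unconditionally --- note you do not actually need to grant Conjecture~\ref{conj:ribbon}\ref{it:r-rKh} for this step, as your phrasing suggests) to the assertion that $d(K)\bmod 2$ is a concordance invariant. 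This framing also cleanly explains the exclusion of $\F_2$ in the unreduced conjectures: over $\F_2$ the connecting map vanishes identically (cf.\ Remark~\ref{rmk:Kh over F2}, which gives $\rk\Kh_{\F_2} = 2\rk\rKh_{\F_2}$, i.e.\ $d\equiv 0$), so the statement becomes vacuous there.

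The genuine gap you name --- showing $d(K)\bmod 2$ is a concordance invariant, or equivalently controlling the connecting map under connected sum or ribbon moves --- is exactly where the problem stands open, and nothing in the paper resolves it. Even a successful reduction to Conjecture~\ref{conj:ribbon unreduced} would not yield a proof, since that conjecture is itself open (and is only verified empirically for $\F_3$ and $\F_{211}$ on the 2.4 million knot sample). So the proposal correctly identifies the landscape but does not, and given the state of the art cannot, supply a proof.
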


Note that in this setting, we do not know if these conjectures are equivalent. In line with Theorem~\ref{thm:1 mod 8 false},
it is not possible to strengthen Conjecture~\ref{conj:ribbon unreduced} to a statement modulo 8.  In particular, there are ribbon knots where $\rk \Kh_\Q \equiv 2 \pmod 8$ and others where $\rk \Kh_\Q \equiv 6 \pmod 8$, for example $6_1 = K6a3$ and $K16n33000$ respectively.

\begin{remark}\label{rmk:Kh over F2}
  In the case that $\ring=\F_2$, we have $\rk\Kh_{\F_2}(K)=2\rk\rKh_{\F_2}(K)$, cf.\ \cite[Corollary 3.2.C]{S:torsion}.  Therefore, since $\rk\rKh_{\F_2}(K)$ is always odd, we immediately have $\rk\Kh_{\F_2}(K)\equiv 2\pmod 4$ for any knot $K$, rendering the $\F_2$ analogue of Conjecture~\ref{conj:ribbon unreduced} uninteresting.  This same relationship between $\rk\Kh_{\F_2}$ and $\rk\rKh_{F_2}$ also implies that our Conjectures \ref{conj:ribbon}\ref{it:r-rKh} and~\ref{conj:mod4}\ref{it:m-rKh} are equivalent to corresponding conjectures for unreduced Khovanov homology modulo 8.
\end{remark}

We checked that Conjecture \ref{conj:ribbon unreduced} holds for all the 2.4 million ribbon knots in the list mentioned in Section \ref{sec:empirical}, with coefficient ring $\F_3$ and $\F_{211}$.


\subsubsection{3-manifolds}
The simplest 3-manifold analogue of our conjecture for Heegaard Floer homology would be the following.

\begin{question}\label{qu:ZHS3 bounding ZHB4}
If a $\Z HS^3$ $Y$ bounds a $\Z HB^4$, is it true that
\[
\rk \HFh (Y) \equiv 1 \pmod 4 ?
\]
\end{question}

A positive answer to this question would be extremely interesting, because the resulting obstruction would be suitable to detect 2-torsion in the integer cobordism group $\Theta_\Z^3$.
Whether such 2-torsion exists is currently an open question. (The Rokhlin invariant is not a helpful obstruction here, because it vanishes on the 2-torsion subgroup of $\Theta_\Z^3$, cf.\ \cite{M:triangulation}.)


\subsection{Organisation}
In Section \ref{sec:motivation}, we give motivations for the Conjecture~\ref{conj:ribbon}, whose potential resulting obstructions to sliceness are then shown to be smooth in Section \ref{sec:applications}.
In Section \ref{sec:equivalence}, we prove Theorem \ref{thm:equivalence} about the equivalence of Conjectures \ref{conj:ribbon} and \ref{conj:mod4}.
In Section \ref{sec:Computations}, we discuss the computations that disprove the Folk Conjecture \ref{conj:ribbon mod 8}.
In Section \ref{sec:fusion1}, we focus on ribbon knots with fusion number 1, for which Conjecture \ref{conj:ribbon} holds.
In Section \ref{sec:non-locality}, we show that a general proof of Conjecture \ref{conj:ribbon} cannot be attained by simple local arguments on the knot diagram.
In Appendix \ref{appendix}, we discuss the (absence of a) relation with the Arf invariant.  Finally, in Appendix~\ref{app: PD codes}, we give descriptions of some of the counter-examples in Section~\ref{sec:Computations}.

\subsection{Acknowledgements}
We are especially grateful to Adam S.\ Levine, Sucharit Sarkar, and Andr\'as I.\ Stipsicz for stimulating conversations and for sharing their ideas.
We also would like to thank Jennifer Hom, Patrick Gilmer, Marco Golla, Lisa Piccirillo, and Arunima Ray.
We used the following websites and computer programs for preliminary computations: SnapPy \cite{SnapPy}, the Knot Atlas \cite{KnotAtlas},  Khoca \cite{LL:Khoca}, KnotInfo \cite{knotinfo}, the program written for \cite{SS:two-fold}, KnotJob \cite{KnotJob}, and Knot Floer homology calculator \cite{Sz:calculator}. We are very grateful to their developers.

Dunfield was partially supported by US National Science Foundation grant DMS-1811156 and a Simons fellowship.
Gong and Willis were supported by NSF Grant No.\ 1440140 while in residence at the Simons Laufer Mathematical Sciences Institute in Berkeley, California, during Fall 2022.
Gong was supported by US National Science Foundation grant DMS-2055736.
Hockenhull was supported by the Max Planck Institute for Mathematics for much of the period in which this research was undertaken.
Marengon acknowledges that:
This project has received funding from the European Union’s Horizon 2020 research and innovation programme under the Marie Sk{\l}odowska-Curie grant agreement No.\ 893282.

\section{Background}
\label{sec:motivation}
The original motivation for our conjecture comes from the Fox-Milnor theorem. This states that if $K \subset S^3$ is a slice knot, then its Alexander polynomial admits a factorisation
\[
\Delta_K(t) \doteq f(t) \cdot f(t^{-1})
\]
for some $f \in \Z[t,t^{-1}]$.
(The symbol `$\doteq$' denotes `equal up to multiplication by a unit in the ring $\Z[t, t^{-1}]$.)
If $K$ is ribbon, then one can choose as $f(t)$ the Alexander polynomial of a ribbon disc $D \subset B^4$ for $K$.

The na\"{i}ve categorification of the Fox-Milnor theorem would state that, for a slice knot $K$, $\HFKh(K)$ factors, as a bigraded $\F_2$-vector space, as $V \otimes V^{*}$, where $V^*$ denotes the dual of the vector space $V$. Obviously, this would imply that if $K$ is slice then $\rk \HFKh(K)$ must be an odd square. This is famously false, as for the Kinoshita-Terasaka knot (which is slice, and in fact also ribbon and a symmetric union) one has $\rk \HFKh = 33$. 

Nonetheless, for some families of slice knots one can still show that $\rk \HFKh$ is an odd square:
\begin{itemize}
    \item if $K$ is a quasi-alternating slice knot, then $\HFKh(K) = V \otimes V^*$ by \cite{MO:quasi-alternating};
    \item if $K$ is the connected sum of a knot and its mirror (these knots are called \emph{rectangular} in \cite{HMW1}), then $\HFKh(K) = V \otimes V^*$ by the connected sum formula for $\HFKh$, cf.\ \cite[Equation (5)]{OSz:HFK} and \cite[Proposition 6.2]{R:HFK};
    \item if $K$ is a symmetric union in the family described in \cite{HMW1}, then $\rk \HFKh (K)$ is an odd square, cf.\ \cite{HMW1}.
\end{itemize}
However, with the Kinoshita-Terasaka knot being an inconvenient counterexample to any odd square rank conjecture, one is led to the question of what the number $33$ has in common with odd squares. One answer is that all these numbers are congruent to $1 \pmod 8$.  The ranks of the reduced Khovanov homology of the knots above are also odd squares, and thus one is led to the Folk Conjecture~\ref{conj:ribbon mod 8}.  However, as we will show in Section \ref{sec:Computations}, there exist ribbon knots whose homology ranks are congruent to $5 \pmod 8$, and thus we formulate Conjecture \ref{conj:ribbon} and the equivalent Conjecture \ref{conj:mod4}.

\begin{remark}
We remark that one cannot hope to strengthen the conjecture beyond $\mathrm{mod}\, 4$.
If $H$ denotes $\HFKh$ or $\rKh_\ring$, and the $\mathrm{mod} \, m$ reduction of $\rk H(\cdot)$ is a concordance invariant, then $m|8$.  This follows from the fact that the square knot $S := T_{2,3} \# \overline{T_{2,3}}$ is slice and $\rk H(S) = 9$, so one would have $9 \equiv 1 \pmod m$, which forces $m|8$.  Then since $m=8$ itself is ruled out via Theorem \ref{thm:1 mod 8 false}, the choice of $m = 4$ is optimal.
\end{remark}

\begin{remark}
The careful reader will have noted that we use various coefficient rings in our conjectures and in computational experimentation. A few remarks to keep in mind are the following:
\begin{itemize}
\item $\HFKh_\ring$ can be defined (at least non-functorially) with $\ring = \F_p$, $\Q$, or $\Z$ coefficients. However, no torsion in $\HFKh_{\Z}$ has been found so far, and therefore the ranks of $\HFKh_\ring$, for $\ring = \F_p$ and $\ring = \Q$, are conjecturally the same. This is why we only consider $\HFKh = \HFKh_{\F_2}$.
\item As a module over $\Z$, (unreduced or reduced) Khovanov homology often has torsion. By the Universal Coefficient Theorem, $\rk \HFKh_\Q$ is equal to the rank of the torsion-free part of $\rk \HFKh_\Z$.
\item In some places, for our computations of Khovanov homology, we computed over $\F_{211}$ instead of over $\Q$, for reasons of computational complexity. Where we found counter-examples to Conjecture~\ref{conj:ribbon mod 8}\ref{it:r-rKh8}, this was done by finding them as counter-examples over $\F_{211}$ and then checking that they were also counter-examples over $\Q$.
\end{itemize}
\label{rmk:base fields}
\end{remark}

\section{Untwisted Whitehead doubles}
\label{sec:applications}
\label{sec:Whitehead}

Since the Piccirillo knot is topologically slice, the potential obstruction coming from $\rk\rKh_{\Q}$ would be smooth and not topological. The following examples show that the corresponding obstruction from $\rk \HFKh$ would also be smooth.

A long-standing question of Kirby (cf.\ \cite[Problem 1.38]{K:list}) asks whether a knot $K$ is slice if and only if its untwisted positive Whitehead double $\Wh^+_0(K)$ is slice. (By mirroring, one can replace `positive' with `negative'.) The subtlety of this question lies in the fact that $\Wh^+_0(K)$ is always topologically slice.

Hedden computed the full knot Floer homology of Whitehead doubles in \cite{H:Whitehead}. From his theorem \cite[Theorem 1.2]{H:Whitehead} it easily follows that
\begin{itemize}
    \item if $\tau(K)\leq0$ (for example, if $K$ is slice or is a negative knot), then $\rk(\HFKh(\Wh^+_0(K))) \equiv 1 \pmod 4$;
    \item if $\tau(K)>0$ (e.g., if $K$ is positive knot), then $\rk(\HFKh(\Wh^+_0(K))) \equiv 3 \pmod 4$.
\end{itemize}
Thus, a proof of Conjecture \ref{conj:mod4}\ref{it:m-HFK} would not give a negative answer to Kirby's question. On the other hand, it would recover Hedden's result that $Wh^+_0(K)$ is topologically but not smoothly slice if $\tau(K)>0$ (cf.\ \cite[Corollary 1.6]{H:Whitehead}).

\section{Equivalence of the two conjectures}
\label{sec:equivalence}

This section proves Theorem \ref{thm:equivalence} regarding the equivalence of Conjectures~\ref{conj:ribbon} and  \ref{conj:mod4}.

\begin{proof}[Proof of Theorem \ref{thm:equivalence}]
The forward direction of both statements is clear given that, for the unknot $U\subset S^3$, $\rk(\HFKh(U))=\rk(\rKh_{\ring}(U))=1$.

For the other direction, we let $H(\cdot)$ denote either knot homology theory $\HFKh(\cdot)$ or $\rKh_{\ring}(\cdot)$, and we assume that, for any ribbon knot $R$, we have
\[\rk(H(R))\equiv 1 \pmod{4}.\]
Now $H(\cdot)$ satisfies a connected sum formula (cf.\ \cite[Equation (2)]{OSz:HFK} and \cite[Proposition 6.2]{R:HFK} for $\HFKh$, and \cite[Proposition~3.3 and Section~3.1]{K:reduced} for $\rKh$), and thus the map
\begin{align*}
\tilde\rho \colon \set{\textrm{knots}} & \to (\Z/4\Z)^* \\ 
K & \mapsto [\rk H(K)]_4 &
\end{align*}
defines a monoid homomorphism. We will prove that under our assumptions $\rk(H(S))\equiv 1 \pmod{4}$ for every slice knot $S$, so the maps will descend to (monoid, hence group) homomorphisms from the concordance group. (If $G$ and $H$ are groups, monoid homomorphisms and group homomorphisms from $G$ to $H$ are the same notion.)

Given a slice knot $S$, there exists some ribbon knot $R$ such that the connected sum $S \# R$ is ribbon (see e.g.\ \cite[Lemma 2.5]{T:slice}), and thus
\[\rk(H(S\# R))\equiv 1 \pmod 4.\]
By the connected sum formula, we simplify the above expression to
\[\rk(H(S)) \cdot \rk(H(R)) \equiv 1 \pmod 4.\]
We have assumed that $\rk(H(R))\equiv 1 \pmod 4$, and thus $\rk(H(S))\equiv 1 \pmod{4}$ as desired.
\end{proof}

\section{Computations and proof of Theorem \ref{thm:1 mod 8 false}}
\label{sec:Computations}

In our computations, we looked at two collections of ribbon knots.  The first was a list of 1.6 million ribbon knots of up to 19 crossings that was compiled by two of the authors, which will appear alongside \cite{DunfieldGong2023}.  (There are 350 million prime knots with at most 19 crossings \cite{Burton}, and the list from \cite{DunfieldGong2023} is nearly complete, with it missing at most 20,000 ribbon knots.) Additionally, we wrote a program to build ribbon knots by taking a randomly-generated knot diagram and forming its symmetric union with a single twisting region containing a single half-twist. We generated 800,000 such knots of up to 29 crossings in this way.

On these 2.4 million knots, we found the ranks of $\HFKh$ using Szab\'o's knot Floer calculator \cite{Sz:calculator} through SnapPy \cite{SnapPy}.  We also computed the ranks of $\rKh_{\F_2}, \rKh_{\F_3}$, and $\rKh_{\F_{211}}$ (see Remark \ref{rmk:base fields}) using KnotJob \cite{KnotJob}.  For all 2.4 million knots, the resulting ranks satisfied Conjecture \ref{conj:ribbon}.  

In the case of the ribbon knots from \cite{DunfieldGong2023}, it should be noted that at least 80\% have fusion number 1 and hence are covered by Theorems~\ref{thm:fusion1} and \ref{thm:fusion1 Kh}; we suspect that most of the remaining 20\% have fusion number 2 or more, but can prove this for less than 20,000 of these knots. 
This prompted us to generate the second list of 800,000 ribbon knots, which typically have higher crossing number.
While thus the effective sample size for Conjecture~\ref{conj:ribbon} is 1.2 million at best, the full sample speaks towards a positive answer to Question~\ref{qu:Khr fusion number 1}. In addition, all of our knots satisfied the stronger Folk Conjecture \ref{conj:ribbon mod 8}\ref{it:r-rKh8} for $\rKh_{\F_2}$.  

We now present further details for the Khovanov homology and knot Floer homology separately, showcasing the counter-examples to the Folk Conjecture \ref{conj:ribbon mod 8}.

\subsection{Khovanov homology computations}
Of the 1.6 million ribbon knots of up to 19 crossings, we found four where the reduced Khovanov homology had rank $5 \pmod{8}$ over both $\ring=\F_3$ and $\Q$, violating the Folk Conjecture \ref{conj:ribbon mod 8}\ref{it:r-rKh8}.
As the integral homology $\Kh_\Z$ for each of these knots has only 2-torsion, the Universal Coefficient Theorem implies that $\rk \rKh_{\F_p} = \rk \rKh_{\Q}$ for all $p \neq 2$, proving Theorem \ref{thm:1 mod 8 false} for the setting of Khovanov homology.  One of these counter-examples is the knot 18nh\_00159590, in the notation of \cite{Burton}. This knot is depicted in Figure \ref{fig:18nh_00159590}. The other counter-examples we found are the knots 18nh\_00752242, 19nh\_000129633, 19nh\_000305767; planar diagram (PD) codes for all four knots are in Appendix~\ref{app: PD codes}.

We remark that all of these counter-examples have fusion number two (prompting us to pose Question \ref{qu:Khr fusion number 1}).  In all four cases, the computer program of \cite{DunfieldGong2023} was able to find a two band simplification of these knots to the 3-component unlink, as in Figure \ref{fig:18nh_00159590}, providing an upper bound on the fusion number.  The lower bound of 2 could then be established computationally using an obstruction of Hom-Kang-Park \cite[Proposition 3.6]{HKP:ribbon} that will be discussed further in Section \ref{sec:HFK tor order 1}.

\begin{figure}
    \centering
    \includegraphics{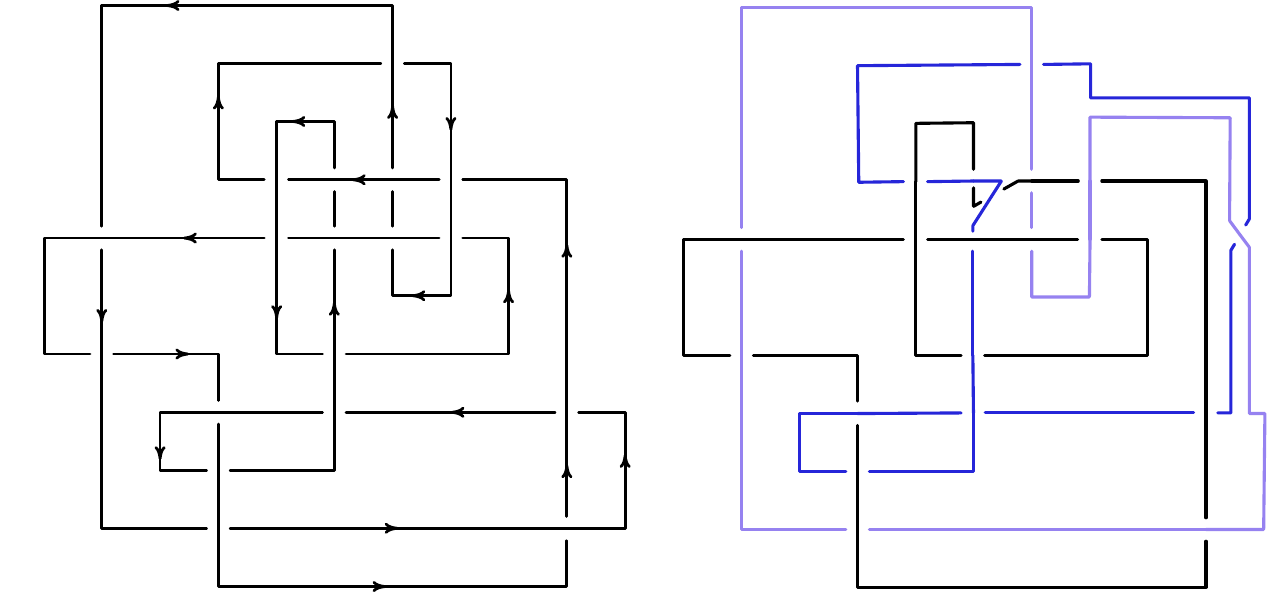}
    \caption{At left is the knot 18nh\_00159590.  At right is that knot with two bands added (one very short), to make a three component unlink. The corresponding ribbon disk shows that 18nh\_00159590 is a ribbon knot with fusion number at most two.}
    \label{fig:18nh_00159590}
\end{figure}



Interestingly, for the 800,000 ribbon knots built as symmetric unions with one twisting region containing a single half-twist, all of the reduced Khovanov homology ranks were $1\pmod 8$.  This suggests:

\begin{question}\label{qu:Kh sym unions 1 tw region}
If $K$ is a symmetric union with one twisting region (containing a single half-twist?), must we have
\[\rk\rKh_{\ring}(K)\equiv 1\pmod 8\]
for some (all?) $\ring$?
\end{question}

\subsection{Knot Floer homology computations}

All of our ribbon knots of up to 19 crossings had knot Floer homology ranks satisfying the Folk Conjecture \ref{conj:ribbon mod 8}\ref{it:r-HFK8} for knot Floer homology. 
However, from the sample of 800,000 knots built as symmetric unions, we found eight examples where the rank of the knot Floer homology was $5 \pmod{8}$, violating the Folk Conjecture \ref{conj:ribbon mod 8}\ref{it:r-HFK8} and thus completing the proof of Theorem \ref{thm:1 mod 8 false}.  One of these symmetric unions is depicted in Figure \ref{fig:eg1_symm_union_without_labels}, which was built from a 13-crossing diagram of $4_1$ as shown there.  The total rank of the knot Floer homology of this symmetric union is 77, which is $5 \pmod{8}$. The specific ranks broken down by grading are depicted in Figure \ref{fig:eg1_hfk_ranks}.

\begin{figure}
    \centering
    \includegraphics[width=0.7\textwidth]{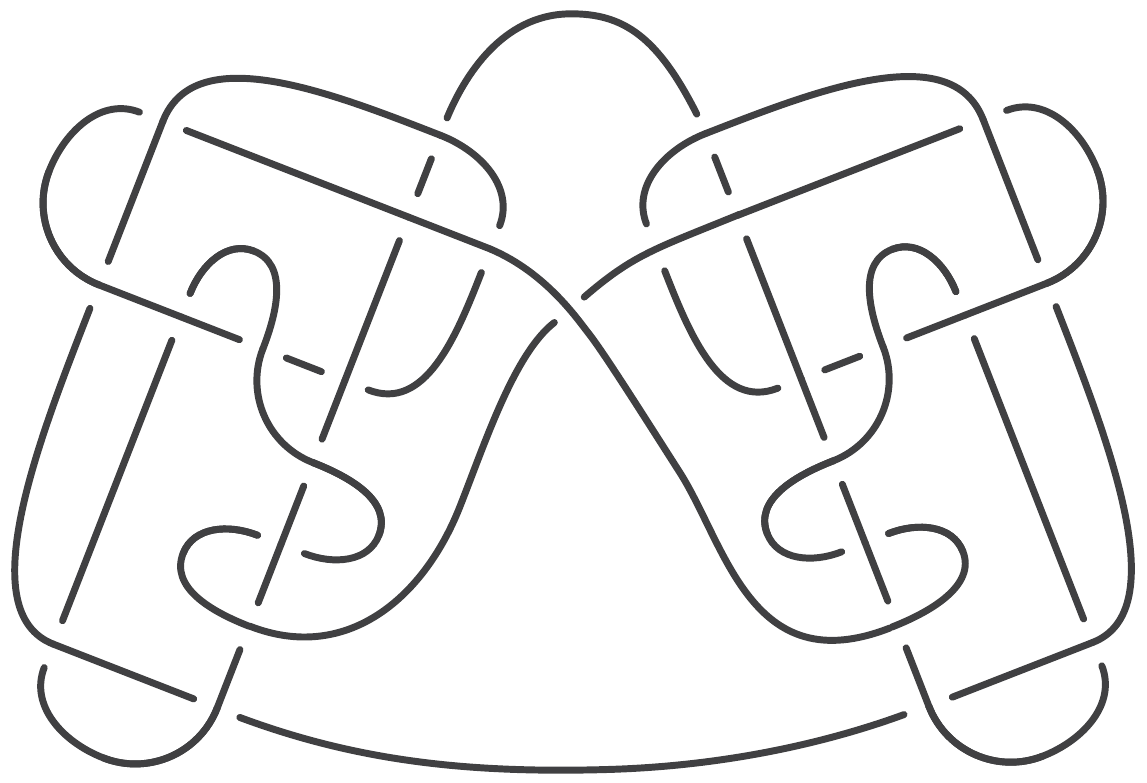}
    \caption{A symmetric union built from a 13-crossing diagram of the knot $4_1$, using one twisting region with one half-twist, which is the crossing in the middle of the diagram. The PD code for this knot is in Appendix~\ref{app: PD codes}.}
    \label{fig:eg1_symm_union_without_labels}
\end{figure}

\begin{figure}
    \centering
    \includegraphics[width=0.9\textwidth]{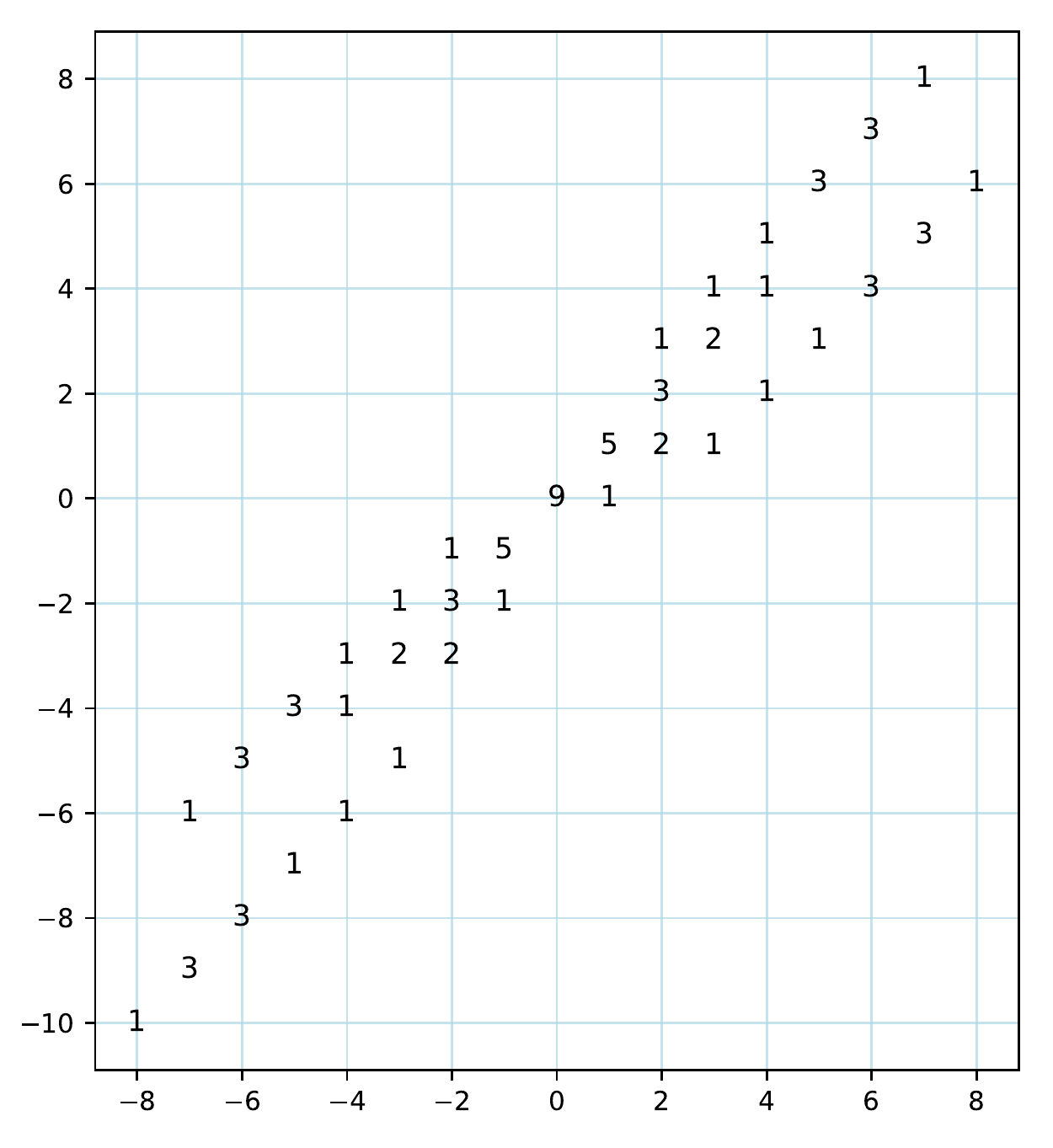}
    \caption{A plot of the knot Floer homology of the symmetric union depicted in Figure \ref{fig:eg1_symm_union_without_labels}. The horizontal axis is the Alexander grading and the vertical axis is the Maslov grading.}
    \label{fig:eg1_hfk_ranks}
\end{figure}

We found seven other knots built as symmetric unions in which the knot Floer homology had rank $5 \pmod{8}$. They are listed in Table \ref{tab:HFK counter-examples}.

\begin{table}[]
    \centering
\begin{tabular}{|c c |} 
 \hline
  \rule{0pt}{3ex}Original knot & $\rk\HFKh$ of the symmetric union\\ [0.5ex] 
 \hline\hline
\rule{0pt}{3ex}K4a1 & 77 \\  [0.5ex] 
 \hline
\rule{0pt}{3ex}K3a1 & 2733 \\  [0.5ex] 
 \hline
\rule{0pt}{3ex}K13n2191 & 9357 \\  [0.5ex] 
 \hline
\rule{0pt}{3ex}K4a1 & 149 \\  [0.5ex] 
 \hline
\rule{0pt}{3ex}K5a1 & 117 \\  [0.5ex] 
 \hline
\rule{0pt}{3ex}K6a3 & 661 \\  [0.5ex] 
 \hline
\rule{0pt}{3ex}K15n52125 & 21269 \\  [0.5ex] 
 \hline
\rule{0pt}{3ex}K15n52125 & 24989 \\  [0.5ex] 
 \hline
\end{tabular}
    \vspace{1ex}
    \caption{The ranks of the knot Floer homologies of the symmetric unions we found that violated the Folk Conjecture \ref{conj:ribbon mod 8}\ref{it:r-HFK8}.  All of these ranks are $5\pmod 8$, satisfying Conjecture \ref{conj:ribbon}\ref{it:r-HFK}. In each case, the original diagram had between 13 and 15 crossings; see \cite{AncillaryFiles} for the PD codes for these knots and related data.
    }
    \label{tab:HFK counter-examples}
\end{table}

\section{Ribbon knots with fusion number 1}
\label{sec:fusion1}

\subsection{Ranks of $\HFKh$ for torsion order 1 knots}
\label{sec:HFK tor order 1}

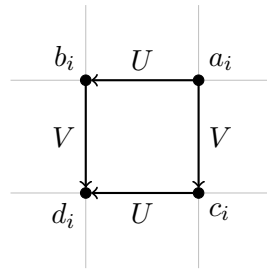
\begin{figure}
\begin{center}
  \def\step{1.5}
  \def\mini{0.07}
  \def\diag{0.05}
  \def\bld{1}
  \def\gsize{1}
  
  \begin{tikzpicture}
  
    \draw[step=\step cm, lightgray, very thin] (-\bld,-\bld) grid (\gsize*\step+\bld,\gsize*\step+\bld);
    
    \draw[fill=black] (\step,\step) circle (\mini);
    \draw[fill=black] (0,\step) circle (\mini);
    \draw[fill=black] (\step,0) circle (\mini);
    \draw[fill=black] (0,0) circle (\mini);
    
    \draw (\step,\step) node[anchor = south west] {$a_i$};
    \draw (0,\step) node[anchor = south east] {$b_i$};
    \draw (\step,0) node[anchor = north west] {$c_i$};
    \draw (0,0) node[anchor = north east] {$d_i$};
    
    \draw[->, thick] (\step-\mini,\step) to (\mini,\step);
    \draw (\step/2,\step) node[anchor = south] {$U$};
    
    \draw[->, thick] (\step,\step-\mini) to (\step,\mini);
    \draw (\step,\step/2) node[anchor = west] {$V$};
    
    \draw[->, thick] (\step-\mini,0) to (\mini,0);
    \draw (+\step/2,0) node[anchor = north] {$U$};
    
    \draw[->, thick] (0,\step-\mini) to (0, \mini);
    \draw (0,\step/2) node[anchor = east] {$V$};
    
  \end{tikzpicture}
\caption{A unit box in the $U-V$ plane.}
\label{fig:unitbox}
\end{center}
\end{figure}

In this section, we prove that ribbon knots with fusion number $1$ satisfy the stronger 1 mod 8 Folk Conjecture \ref{conj:ribbon mod 8}\ref{it:r-HFK}.  We will actually prove a more general statement, which involves the following quantity. Recall that the \emph{knot Floer torsion order} $\Ord(K)$ of a knot $K$ is the minimum power of $U$ that annihilates the torsion $\F[U]$-submodule of $\HFKm(K)$.  As a consequence of \cite[Corollary 1.7]{JMZ:ribbon}, ribbon knots with fusion number $1$ have torsion order $1$. Thus, Theorem \ref{thm:fusion1} from the introduction is a direct consequence of:

\begin{theorem}
\label{thm:TO}
If $K$ is a ribbon knot with $\Ord(K) = 1$, then
\[
\rk \HFKh(K) \equiv 1 \pmod8.
\]
\end{theorem}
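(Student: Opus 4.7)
The plan is to combine Hom's structural decomposition of $\CFK^\infty$ for slice knots with the strong rigidity imposed by $\Ord(K)=1$.

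First I would invoke Hom's decomposition theorem \cite[Theorem 1]{H:survey}: since $K$ is ribbon, hence slice, $\tau(K)=0$ and $\CFK^\infty(K)$ is filtered chain homotopy equivalent to $\CFK^\infty(U)\oplus A$ for some acyclic $(\Z\oplus\Z)$-filtered complex $A$ equipped with the symmetries that reflect sliceness; in particular the conjugation/transposition involution $\iota$ swapping the two filtrations restricts to an involution of $A$.

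Next I would use $\Ord(K)=1$ to cut $A$ down to a direct sum of unit boxes. Writing $\HFKm(K) = \F[U]\oplus T$, the hypothesis forces $U\cdot T = 0$. In a minimal model for $\CFK^\infty(K)$, any $U$- or $V$-arrow in $A$ of length $\geq 2$ would produce $U$-torsion of order $\geq 2$ in $T$, so such arrows must be absent. After a change of basis this forces $A$ to decompose as a direct sum of unit boxes of the form depicted in Figure~\ref{fig:unitbox}.

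The rank count is then immediate: each unit box contributes exactly $4$ to $\rk\HFKh(K)$, since setting $U=V=0$ kills its four arrows and leaves the four corners as independent surviving generators. If $A$ consists of $n$ unit boxes, then
\[
\rk\HFKh(K) \;=\; 1 + 4n.
\]
To upgrade $1+4n$ to $1 \pmod 8$, I would then exploit the involution $\iota$ on $A$: since $K$ is slice, $\iota$ must permute the unit-box summands so that their orbits have even size, forcing $n$ to be even. This gives $\rk\HFKh(K) = 1 + 8m \equiv 1 \pmod 8$.

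The main obstacle I expect is the final parity step, i.e.\ ensuring that the unit boxes of $A$ really do pair up under the sliceness-induced symmetry. ``Diagonal'' boxes fixed by the conjugation involution are the natural enemy: such a box would contribute $4$ rather than $8$ to the rank and would jeopardize the $\pmod 8$ congruence. Excluding them (or pairing them with each other) should require a grading-parity analysis of the Maslov and Alexander bigradings at the corners of each box, combined with either the specific form of Hom's decomposition for a ribbon knot or additional input from the ribbon structure beyond mere sliceness. The earlier step of extracting the unit-box decomposition from torsion-order $1$ is also technical but should follow from a careful minimal-model reduction along the lines of the analysis in \cite{HKP:ribbon}.
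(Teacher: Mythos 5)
Your extraction of the unit-box decomposition from $\Ord(K)=1$ (via Hom's decomposition or, equivalently, the HKP analysis) and the resulting congruence $\rk\HFKh(K)=1+4n$ match the paper's first half, which simply cites \cite[Proposition 3.6]{HKP:ribbon} to get $\CFK_{\F[U,V]}(K)\simeq \F[U,V]\oplus A_1\oplus\cdots\oplus A_\ell$ with each $A_i$ a unit box. The genuine gap is exactly where you suspected: your proposed upgrade from $1\pmod 4$ to $1\pmod 8$ does not work. The conjugation involution is not required to permute the box summands in orbits of even size; a single unit box centred symmetrically in the $U$-$V$ plane is preserved by the involution (it swaps $b_i\leftrightarrow c_i$ and fixes $a_i,d_i$), so $\iota$-fixed boxes are not only possible but are the generic case, and your suggested ``grading-parity analysis plus extra ribbon input'' to exclude or pair them is a hope rather than an argument.

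The paper's mechanism is different and does not use the involution at all. Each unit box has all four generators in a single $\delta$-grading $\delta_i$ (since $\delta(U)=\delta(V)=\delta(\partial)=-1$), so the $\delta$-graded Euler characteristic $\pm\det K$ of $\HFKh(K)$ equals $(-1)^{\delta_0}+4\sum_{i=1}^{\ell}(-1)^{\delta_i}$, giving $\det K\equiv 4\ell\pm 1\pmod 8$. Levine's formula (Theorem~\ref{thm:Levine}) says $\det K\equiv 4\Arf K+1\pmod 8$, and $\Arf K=0$ because $K$ is slice, so $\det K\equiv 1\pmod 8$; combining forces $\ell$ even. Your intuition that a grading-parity computation is the right tool was correct, but the relevant quantity is the $\delta$-graded Euler characteristic compared against the Arf invariant, not a pairing of summands under the conjugation symmetry.
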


\begin{proof}
By \cite[Proposition 3.6]{HKP:ribbon}, the knot Floer complex $\CFK_{\F[U,V]}(K)$ of a fusion number $1$ ribbon knot $K$ decomposes as
\[
\CFK_{\F[U,V]}(K) \simeq \F[U,V] \oplus A_1 \oplus \cdots \oplus A_\ell,
\]
where each $A_i$ is a free $\F[U,V]$-module of rank $4$, with 4 homogeneous generators $a_i, b_i, c_i, d_i$, and differential given by
\begin{align*}
    \de a_i &= U\cdot b_i + V \cdot c_i;\\
    \de b_i &= V \cdot d_i;\\
    \de c_i &= U \cdot d_i;\\
    \de d_i &= 0.
\end{align*}
(Hom-Kang-Park call each $A_i$ a \emph{unit box}. See Figure \ref{fig:unitbox} for an illustration.)
Each unit box contributes $4$ to the rank of $\HFKh(K)$, so we immediately see that $\rk\HFKh(K) \equiv 1 \pmod 4$. In order to get the stronger statement $\pmod8$, we use the Arf invariant.

Recall that the full knot Floer complex comes with two gradings, denoted $\gr_\w$ and $\gr_\z$ \cite{Z:gradings}. The $\delta$ grading is defined as
\[
\delta = \frac{\gr_\w + \gr_\z}2
\]
Since $\delta(\de) = \delta(U) = \delta(V) = -1$, it is straightforward to check that
\[
\delta(a_i) = 
\delta(b_i) = 
\delta(c_i) = 
\delta(d_i).
\]
Thus, the $\delta$-graded knot Floer homology of $K$ can be written as
\begin{equation}
\label{eq:HFKh-delta}
\HFKh(K) = \F_{\delta_0} \oplus \F^4_{\delta_1} \oplus \cdots \oplus \F^4_{\delta_\ell},
\end{equation}
for some $\delta$-gradings $\delta_0, \delta_1, \ldots, \delta_\ell \in \Z$.

The $\delta$-graded Euler characteristic of $\HFKh(K)$ is $\pm\det K$. Thus, from Equation \ref{eq:HFKh-delta}, we compute
\[
\pm\det K = (-1)^{\delta_0} + 4 \cdot (-1)^{\delta_1} + \cdots + 4 \cdot (-1)^{\delta_\ell}.
\]
Reducing modulo $8$ we get
\[
\det K \equiv 4 \ell \pm 1 \pmod8.
\]
By comparing with Levine's formula (cf.\ Theorem \ref{thm:Levine}) and using the fact that $\Arf K = 0$ (since $K$ is slice), we obtain that $\ell$ must be even. Therefore, by Equation \ref{eq:HFKh-delta}, we get that $\rk \HFKh(K) \equiv 1 \pmod 8$.
\end{proof}

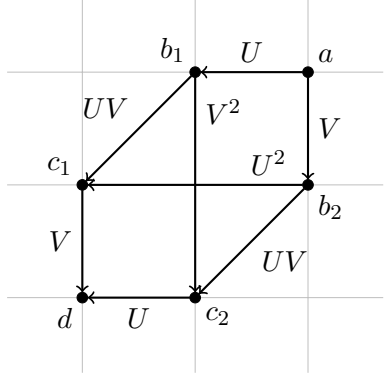
\begin{figure}
\begin{center}
  \def\step{1.5}
  \def\mini{0.07}
  \def\diag{0.05}
  \def\gwidth{2.5}
  
  \begin{tikzpicture}
  
    \draw[step=\step cm, lightgray, very thin] (-\gwidth,-\gwidth) grid (\gwidth,\gwidth);
    
    \draw[fill=black] (\step,\step) circle (\mini);
    \draw[fill=black] (0,\step) circle (\mini);
    \draw[fill=black] (\step,0) circle (\mini);
    \draw[fill=black] (-\step,0) circle (\mini);
    \draw[fill=black] (0,-\step) circle (\mini);
    \draw[fill=black] (-\step,-\step) circle (\mini);
    
    \draw (\step,\step) node[anchor = south west] {$a$};
    \draw (0,\step) node[anchor = south east] {$b_1$};
    \draw (\step,0) node[anchor = north west] {$b_2$};
    \draw (-\step,0) node[anchor = south east] {$c_1$};
    \draw (0,-\step) node[anchor = north west] {$c_2$};
    \draw (-\step,-\step) node[anchor = north east] {$d$};
    
    \draw[->, thick] (\step-\mini,\step) to (\mini,\step);
    \draw (\step/2,\step) node[anchor = south] {$U$};
    
    \draw[->, thick] (\step,\step-\mini) to (\step,\mini);
    \draw (\step,\step/2) node[anchor = west] {$V$};
    
    \draw[->, thick] (-\diag, \step-\diag) to (-\step+\diag, \diag);
    \draw (-\step/2,\step/2) node[anchor = south east] {$UV$};
    
    \draw[->, thick] (\step-\diag, -\diag) to (\diag, -\step+\diag);
    \draw (\step/2,-\step/2) node[anchor = north west] {$UV$};
    
    \draw[->, thick] (0,\step-\mini) to (0,-\step+\mini);
    \draw (0,0.65*\step) node[anchor = west] {$V^2$};
    
    \draw[->, thick] (\step-\mini,0) to (-\step+\mini,0);
    \draw (0.65*\step,0) node[anchor = south] {$U^2$};
    
    \draw[->, thick] (-\mini,-\step) to (-\step+\mini,-\step);
    \draw (-\step/2,-\step) node[anchor = north] {$U$};
    
    \draw[->, thick] (-\step,-\mini) to (-\step, \mini-\step);
    \draw (-\step,-\step/2) node[anchor = east] {$V$};
    
  \end{tikzpicture}
\caption{The $\F[U,V]$-chain complex $A$ in the $U-V$ plane.}
\label{fig:acyclicbad}
\end{center}
\end{figure}

\begin{remark}\label{rmk:HFK = Det mod 16}
In fact, Theorem \ref{thm:fusion1} can be further refined to prove that fusion number 1 ribbon knots $K$ satisfy
\[\rk\HFKh(K) \equiv \det K \pmod{16}.\]
The argument, which was pointed out to us in correspondence with Adam Levine, uses involutive Heegaard Floer techniques (specifically, the proof of Proposition 8.1 in \cite{HM:involutive}) and in fact shows the stronger result that torsion order 1 knots satisfy
\[\rk\HFKh(K) \equiv (-1)^{\tau(K)} \cdot \det K \pmod{16}.\]
\end{remark}

\begin{remark}
\label{rem:nohigherTO}
The strategy used in the proof of Theorem \ref{thm:TO}, which relies on the algebraic structure of acyclic summands of $\CFK_{\F[U,V]}$, must fail for knots with higher torsion order as seen by the counter-examples of Table \ref{tab:HFK counter-examples}.  Indeed this approach does not seem to shed light on the weaker Conjecture \ref{conj:ribbon}\ref{it:r-HFK} for higher torsion order knots either.  For example, $\CFK_{\F[U,V]}$ of a knot with torsion order $2$ might have a summand $A$ freely generated over $\F[U,V]$ by $a, b_1, b_2, c_1, c_2, d$, with gradings $\gr = (\gr_\w, \gr_\z)$ and differential given by
\begin{align*}
    \gr (a) &= (0,0);
    & \de a &= U\cdot b_1 + V \cdot b_2;\\
    \gr (b_1) &= (1,-1);
    & \de b_1 &= UV\cdot c_1 + V^2 \cdot c_2;\\
    \gr (b_2) &= (-1,1);
    & \de b_2 &= U^2\cdot c_1 + UV \cdot c_2;\\
    \gr (c_1) &= (2,0);
    & \de c_1 &= V \cdot d;\\
    \gr (c_2) &= (0,2);
    & \de c_2 &= U\cdot d;\\
    \gr (d) &= (1,1);
    & \de d &= 0.
\end{align*}
See Figure \ref{fig:acyclicbad} for an illustration.
The chain complex $A$ satisfies the formal symmetries of $\HFK$, and its contribution to $\HFKh$ would be a rank $6$ summand. Moreover, the resulting $\delta$-graded Euler characteristic would be $0$, so $A$ would not affect the Arf invariant. Thus, we do not know any algebraic obstructions to having a torsion order $2$ ribbon knot $K$ with
\[
\CFK_{\F[U,V]}(K) \cong \F[U,V] \oplus A,
\]
and this would not satisfy Conjecture \ref{conj:ribbon}\ref{it:r-HFK}.
\end{remark}

\begin{remark}
\label{rem:noKh}
It is also unlikely that the strategy used in the proof of Theorem \ref{thm:TO} may be adapted to (reduced) Khovanov homology in the hopes of answering Question \ref{qu:Khr fusion number 1}. The key ingredient of such a proof is Hom's decomposition \cite[Theorem 1]{H:survey}, and no analogue for Khovanov homology is presently known.

Furthermore, each unit box appearing in Hom's decomposition contributes to a summand $B$ of $\HFKh$ with gradings $(\gr_\w, \gr_\z, \Alex)$ being, up to an overall shift,
\[
B = \F_{(-1, 1, -1)} \oplus \F^2_{(0,0,0)} \oplus \F_{(1,-1,1)}.
\]
(Recall that the Alexander grading $\Alex$ is recovered by $\Alex = \frac{\gr_\w-\gr_\z}2$.)  See Figure \ref{fig:Bcomplex}.

However, $\rKh_{\ring}(6_1)$ (where $\ring$ is $\Z$ or any field $\F_p$) does not admit a bigraded decomposition into a 1-dimensional summand and some summands with a 1-2-1 pattern as in $B$. See Figure \ref{fig:StevedorerKh}. Note that $6_1$, a.k.a.\ Stevedore's knot, is ribbon with fusion number $1$.
\end{remark}

\begin{figure}
\begin{center}
  \def\step{1}
  \def\mini{0.07}
  \def\diag{0.05}
  \def\displment{0.9}
  \def\gwidth{2.9}
  
  \begin{tikzpicture}
  
    \draw[step=\step cm, lightgray, very thin] (-\gwidth+\step,-\gwidth+\step+\displment) grid (\gwidth-\displment,\gwidth);
    \draw[lightgray, very thin] (-\gwidth+\step, \gwidth) -- (-\gwidth+\step+\displment, \gwidth-\displment);
    \draw (-\gwidth+\step+0.2*\displment, \gwidth-0.2*\displment) node[anchor=west] {\footnotesize $\gr_\w$};
    \draw (-\gwidth+\step+0.2*\displment, \gwidth-0.4*\displment) node[anchor=north] {$\Alex$};
    
  \begin{scope}[xshift=0.5*\step cm, yshift=0.5*\step cm]
  
    \draw (0,0) node {$\F^2$};
    \draw (\step, \step) node {$\F$};
    \draw (-\step, -\step) node {$\F$};
    
    \draw (-\step,2*\step) node {$-1$};
    \draw (0,2*\step) node {$0$};
    \draw (\step,2*\step) node {$1$};
    \draw (-2*\step,-\step) node {$-1$};
    \draw (-2*\step,0) node {$0$};
    \draw (-2*\step,\step) node {$1$};
    
\end{scope}
  \end{tikzpicture}
\caption{The $(\gr_\w, \Alex)$-bigraded $\F$-vector space $B$ in the $\gr_\w-\Alex$ plane.}
\label{fig:Bcomplex}
\end{center}
\end{figure}

\begin{figure}
\begin{center}
  \def\step{1}
  \def\mini{0.07}
  \def\diag{0.05}
  \def\displment{0.9}
  \def\gwidth{4.9}
  
  \begin{tikzpicture}
  
    \draw[step=\step cm, lightgray, very thin] (-\gwidth+\step,-\gwidth+\step+\displment) grid (\gwidth-\displment,\gwidth);
    \draw[lightgray, very thin] (-\gwidth+\step, \gwidth) -- (-\gwidth+\step+\displment, \gwidth-\displment);
    \draw (-\gwidth+\step+0.4*\displment, \gwidth-0.2*\displment) node[anchor=west] {$h$};
    \draw (-\gwidth+\step+0.2*\displment, \gwidth-0.4*\displment) node[anchor=north] {$q$};
    
  \begin{scope}[xshift=1.5*\step cm, yshift=1.5*\step cm]
  
    \draw (2*\step, 2*\step) node {$\Z$};
    \draw (\step, \step) node {$\Z$};
    \draw (0,0) node {$\Z^2$};
    \draw (-\step, -\step) node {$\Z^2$};
    \draw (-2*\step, -2*\step) node {$\Z$};
    \draw (-3*\step, -3*\step) node {$\Z$};
    \draw (-4*\step, -4*\step) node {$\Z$};
    
    \foreach \x in {-4,...,2}
        \draw (\x*\step,3*\step) node {$\x$};
    
    \foreach \x in {-4,...,2}
    {   \pgfmathtruncatemacro{\y}{2*\x};
        \draw (-5*\step,\x*\step) node {$\y$};
    }
    
\end{scope}

  \end{tikzpicture}
\caption{The $(h, q)$-bigraded abelian group $\rKh_{\Z}(6_1)$ in the $h-q$ plane.
Note that since $\rKh_{\Z}(6_1)$ is a free abelian group one can recover $\rKh_{\F_p}(6_1)$ just by tensoring $\rKh_{\Z}(6_1)$ with $\F_p$.}
\label{fig:StevedorerKh}
\end{center}
\end{figure}

\subsection{Ranks of $\rKh$ for $X$-torsion order 1 knots}\label{sec:Kh xo 1}
In this section, we present a proof, due to Robert Lipshitz and Sucharit Sarkar, of Theorem \ref{thm:fusion1 Kh} stating that ribbon knots with fusion number 1 satisfy Conjecture \ref{conj:ribbon}\ref{it:r-rKh}.  As in Section \ref{sec:HFK tor order 1}, the proof applies more generally to a class of knots defined in terms the following:

\begin{definition}[\cite{SS:xo,AL:xo}]
The \emph{$\ring$-extortion order} of a knot $K$, denoted $\xo_\ring(X)$, is the maximal order of $X$-torsion in a certain lift of Lee homology $LH(K)$ (obtained by setting $X^2=t$ in the Frobenius algebra defining Khovanov homology, instead of $X^2=1$; see \cite{Khovanov2006}) viewed as a module over $\ring[X]$.
\end{definition}

Lipshitz and Sarkar used this notion to prove the following theorem, which we will see is a generalization of Theorem \ref{thm:fusion1 Kh}; we include the unpublished proof with their permission.

\begin{theorem}[Lipshitz-Sarkar]\label{thm:xo 1 Kh}
If $\ring$ is a field and $K$ is a ribbon knot with $\xo_\ring(K)=1$, then $\rk\rKh_\ring(K)\equiv 1\pmod 4$.
\end{theorem}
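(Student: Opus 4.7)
The plan is to adapt to Khovanov homology the strategy used to prove Theorem~\ref{thm:TO} in the knot Floer setting. View the lifted Lee homology $LH_\ring(K)$ (defined using $X^2=t$) as a finitely generated module over the PID $\ring[X]$. By the classification theorem for modules over a PID, it decomposes as
\[
LH_\ring(K) \;\cong\; \ring[X]^{\oplus a} \;\oplus\; \bigoplus_{i=1}^{\ell} \ring[X]/(X^{n_i}),
\]
with each $n_i\geq 1$. The hypothesis $\xo_\ring(K)=1$ forces every $n_i=1$, so every torsion summand is a single copy of $\ring$.

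Next, one relates this decomposition to $\rKh_\ring(K)$ via the Turner/Bar-Natan filtration spectral sequence converging from $\rKh_\ring(K)$ to $LH_\ring(K)$. For a knot the reduced Lee homology has rank one, so $a=1$, and this free summand contributes a single surviving generator to $\rKh_\ring(K)$. Each unit torsion summand $\ring[X]/(X)$, on the other hand, should be thought of as the homology of a four-generator \emph{box} that cancels through the spectral sequence, contributing exactly $4$ to $\rk\rKh_\ring(K)$. The relevant bigraded pattern is precisely the $1$-$2$-$1$ configuration $B$ of Figure~\ref{fig:Bcomplex}, the Khovanov analogue of the Hom-Kang-Park unit box $A_i$ of Theorem~\ref{thm:TO}. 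Summing contributions,
\[
\rk \rKh_\ring(K) \;=\; 1 + 4\ell \;\equiv\; 1 \pmod 4.
\]

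The ribbon hypothesis enters principally through the constraint $\xo_\ring(K)=1$; this is presumably how Lipshitz-Sarkar will use it to deduce Theorem~\ref{thm:fusion1 Kh} for fusion number $1$ ribbon knots. The main technical obstacle is establishing that each unit torsion summand of $LH_\ring(K)$ contributes exactly $4$ (rather than $2$) generators to $\rKh_\ring(K)$. This requires a careful page-by-page analysis of the deformation spectral sequence together with either a bigraded symmetry of the lifted complex or a structural feature of the Alishahi-Dowlin/Sarkar-Seed $X$-torsion framework, in order to promote the naive pair of generators attached to a torsion class to a full quadruple in the bigraded pattern of Figure~\ref{fig:Bcomplex}.
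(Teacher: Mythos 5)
Your proposal diverges from the paper's proof at a crucial step, and the divergence is a genuine gap, not merely a different route.

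The paper works with the Lee \emph{complex} $LC(K)$, not just the homology: over the PID $\ring[X]$ a chain complex splits as $\ring[X]^r \oplus \bigoplus_i (\ring[X]\xrightarrow{X^{a_i}}\ring[X])$, and $\xo_\ring(K)=1$ forces $r=1$ and all $a_i=1$. Setting $X=0$ kills every differential, so the free part contributes $1$ generator and each two-step summand contributes exactly $2$ generators to $\rKh_\ring(K)$ --- not the $4$ you claim. There is no second formal variable in the Lee deformation to double this up the way $U$ and $V$ do for $\CFK_{\F[U,V]}$, and there is no hidden $1$-$2$-$1$ box: the analogue of Figure~\ref{fig:Bcomplex} simply does not arise. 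The structural picture you are reaching for is the reason the HFK result is mod $8$ while the Khovanov result is only mod $4$.

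Consequently your formula $\rk\rKh_\ring(K)=1+4\ell$ is false; what one actually gets is $\rk\rKh_\ring(K)=1+2k$ where $k$ is the number of torsion summands, and $1+2k$ is \emph{not} automatically $1\pmod 4$. The paper closes this gap with an input you omit entirely: since $a_i=1$, each $2$-dimensional summand $V_i$ sits in a single $\delta$-grading $\delta=\frac{q}{2}-h$, so it contributes $\pm 2$ to the $\delta$-graded Euler characteristic, which equals $\det K$. The ribbon hypothesis is then used a second time (not only through $\xo_\ring=1$, as you assert) via Fox--Milnor: $\det K$ is an odd square, hence $\equiv 1\pmod 8$. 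Writing $k=k_e+k_o$ by $\delta$-parity, $1+2k_e-2k_o\equiv 1\pmod 8$ forces $k_e\equiv k_o\pmod 4$, whence $1+2k_e+2k_o\equiv 1\pmod 4$. Without the Fox--Milnor step and the $\delta$-grading concentration (which itself relies on $a_i=1$), the spectral-sequence and symmetry arguments you gesture at do not produce the needed congruence.
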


\begin{proof}
The Lee complex $LC(K)$, viewed as a module over the PID $\ring[X]$, must split as
\[LC(K)\cong \ring[X]^r \oplus (\ring[X]\xrightarrow{X^{a_1}}\ring[X]) \oplus \cdots \oplus (\ring[X]\xrightarrow{X^{a_k}}\ring[X]).\]
If $\xo_\ring(K)=1$, we must have 
\[r=a_1=\cdots =a_k =1\]
in the above.

The reduced Khovanov complex of $K$ is recovered from this Lee complex by setting $X=0$.  In this case, each summand $(\ring[X]\xrightarrow{0}\ring[X])$ contributes a two-dimensional vector space $V$ to the total reduced Khovanov homology.
Consider a choice of mixed grading $\delta=\frac{q}{2}-h$ for Khovanov homology (here $q$ denotes the usual quantum grading, and $h$ the usual homological grading) allowing the graded Euler charateristic to recover the determinant of the knot (see \cite{MO:quasi-alternating}).  Since $a_i = 1$, this $V$ must be contained in a single $\delta$-grading.

Now we let $k_e$ (respectively $k_o$) denote the number of copies of $V$ in even (respectively odd) $\delta$-grading.  Taking the graded Euler characteristic then tells us that
\[1+2k_e - 2k_o \equiv \det(K) \equiv 1 \pmod 8,\]
and therefore $k_e\equiv k_o \pmod 4$.  This implies that the total rank is
\[1+2k_e+2k_o \equiv 1 \pmod 4.\qedhere\]
\end{proof}

\begin{proof}[Proof of Theorem \ref{thm:fusion1 Kh}]
Suppose $K$ is a ribbon knot of fusion number 1, and $\ring=\Q$ or $\F_p$ for $p\neq 2$.  Then \cite[Corollary 1.2]{SS:xo} shows that $\xo_\ring(K)=1$, and thus Theorem \ref{thm:xo 1 Kh} completes the proof.
\end{proof}

\section{Non-locality}
\label{sec:non-locality}

\begin{figure}
    \centering
    \input{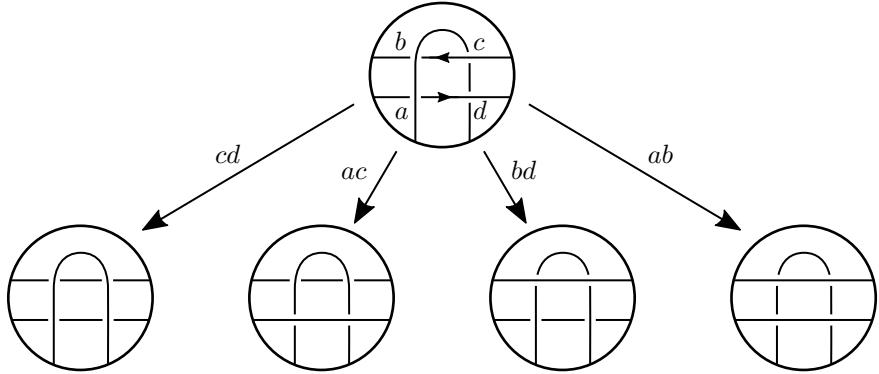}
    \caption{Given a ribbon knot $K$ and a ribbon disc $D$, there exists a projection of $K$ that locally looks as the tangle at the top of the picture and such that the restriction of $D$ to the tangle is the obvious locally ribbon surface. The resulting ribbon singularity can be resolved with 2 crossing changes in 4 different ways.}
    \label{fig:ribbon-singularity}
\end{figure}

A tempting approach to prove Conjecture \ref{conj:ribbon} is to induct on the number of ribbon singularities, and hope to relate the ranks mod $4$ of a ribbon knot $K$ with the rank modulo $4$ of simpler ribbon knots.
Every ribbon knot $K$ together with a ribbon disc $D$ has a local projection as in Figure \ref{fig:ribbon-singularity}, where a ribbon singularity of $D$ is visible.
One can change two of the four local crossings in four different ways to resolve the ribbon singularity, and get a new ribbon knot together with a ribbon disc with one fewer ribbon singularity.

One might hope to find a local proof of Conjecture \ref{conj:ribbon}, based on this local move. The next statement shows that this cannot happen unless the ribbon disc is used in an essential way in the induction step.

\begin{proposition}
\label{prop:non-locality}
There exist knots that can be turned into the unknot $U$ by a sequence of local moves as in Figure \ref{fig:ribbon-singularity} and that do not satisfy the equalities in Conjecture \ref{conj:ribbon}.
\end{proposition}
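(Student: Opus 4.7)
The plan is to exhibit a specific knot $K$ that (i) can be transformed into the unknot $U$ by a finite sequence of the local moves of Figure \ref{fig:ribbon-singularity}, and (ii) satisfies $\rk H(K) \not\equiv 1 \pmod 4$ for at least one of $H = \HFKh$ or $H = \rKh_\ring$. The existence of such a $K$ rules out any proof of Conjecture \ref{conj:ribbon} which proceeds by induction on the number of ribbon singularities using purely the local data of Figure \ref{fig:ribbon-singularity}, since such an induction would be forced to establish the conjectured congruence for every knot reducible to $U$ in this way.

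My approach is to start from the unknot and run the local move in reverse. Each forward move amounts to switching two specified crossings inside a prescribed four-crossing ribbon-singularity tangle; the reverse move therefore inserts such a tangle into a diagram by switching a chosen pair of crossings (after a suitable isotopy to align them with the tangle template). Iterating this reverse process from $U$ builds a more complicated knot $K$ together with an immersed disc assembled out of the ribbon-singularity pieces. The crucial point is that this disc need not be embedded in $B^4$: the successive local singularities do not in general fit together into a genuine ribbon disc, so $K$ need not be slice, and its rank modulo $4$ is entirely unconstrained by the conjecture.

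For the explicit example I would use the right-handed trefoil $K = 3_1$. One has $\rk \HFKh(3_1) = \rk \rKh_\Q(3_1) = 3 \equiv 3 \pmod 4$, violating both congruences in Conjecture \ref{conj:ribbon}. It remains to produce an explicit sequence of the local moves taking $U$ to $3_1$. Since any two crossing changes in a $3$-crossing diagram of $3_1$ yield the unknot, it suffices to take such a diagram, enlarge it diagrammatically (e.g.\ via a Reidemeister I move) so that the two targeted crossings can be isotoped into the ribbon-singularity tangle template of Figure \ref{fig:ribbon-singularity}, and then read off the corresponding forward move from $3_1$ to $U$. Reversing the sequence exhibits $3_1$ as the desired knot.

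The main obstacle is the concrete diagrammatic check in the previous step: one must verify that the two chosen crossings can indeed be arranged to sit inside \emph{exactly} the tangle of Figure \ref{fig:ribbon-singularity}, with the prescribed over/under pattern, and that the desired switching corresponds to one of the four pictured resolutions rather than to a superficially similar but different tangle move. This is a finite elementary case analysis on a low-crossing diagram. Once completed, the proposition follows immediately from the standard rank computations for $3_1$; and should $3_1$ itself fail to fit the template, the same recipe applies to any knot of small unknotting number whose homology rank is not $1 \pmod 4$.
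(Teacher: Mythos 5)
Your overall strategy is the same as the paper's: exhibit a non-slice knot whose homology rank is $3 \pmod 4$ and which fits the local ribbon-singularity tangle so that one of the four resolutions unknots it. The paper does exactly this with $6_2$ (alternating, determinant $11$, so both $\HFKh$ and $\rKh_\ring$ have rank $11 \equiv 3 \pmod 4$), together with an explicit diagram in which the four-crossing tangle of Figure~\ref{fig:ribbon-singularity} is visibly present and all four prescribed resolutions produce the unknot.

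The gap in your version is precisely the step you flag as ``the main obstacle,'' and it is not merely a tedious formality. The ribbon-singularity tangle is not an arbitrary pair of crossing changes: it is a specific four-crossing configuration with a prescribed over/under pattern and prescribed orientations (the ``band-through-slit'' picture), and the move must change one of the four designated pairs of those crossings, not some other pair. Your candidate $3_1$ has only three crossings in its minimal diagram, and your suggested fix of adding a crossing by a Reidemeister~I move produces a kink in which both strands of the new crossing belong to the same arc, which is not the local shape of a ribbon singularity; it is not clear that any enlargement of a trefoil diagram realizes the required tangle with the required orientations. Absent that verification, you have not actually produced a knot satisfying the hypothesis of the proposition, only a plausible candidate. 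The fallback remark that ``the same recipe applies to any knot of small unknotting number'' is also not an argument: it is exactly the assertion whose verification is missing, and the paper's choice of the $6$-crossing knot $6_2$ rather than $3_1$ suggests the small examples are not interchangeable. To close the gap you would need to do what the paper does: draw an explicit diagram of your chosen knot and point to the four crossings forming the tangle, then check that the four resolutions each give the unknot.
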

\begin{proof}
Figure \ref{fig:non-locality} shows a projection of the knot $6_2$ with a local tangle that looks exactly as in Figure \ref{fig:ribbon-singularity} (orientation included).
Any of the 4 simplifications described in Figure \ref{fig:ribbon-singularity} turns $6_2$ into the unknot.
The knot $6_2$ is alternating and has determinant $11$. Thus, by \cite{MO:quasi-alternating}, its knot Floer homology and reduced Khovanov homology (over any ring $\ring$) have rank $11$, which is $3 \pmod4$. Of course, $6_2$ is not ribbon (nor slice).
\end{proof}

\begin{figure}
    \centering
    \includegraphics{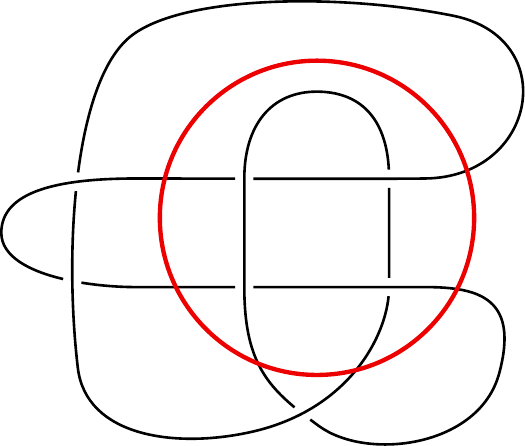}
    \caption{The knot $6_2$ can be unknotted with the local move in Figure \ref{fig:ribbon-singularity}, but its $\HFKh$ and $\rKh$ do not have rank congruent to $1 \pmod 4$.}
    \label{fig:non-locality}
\end{figure}

\appendix
\section{Polynomial invariants and the Arf invariant}
\label{appendix}

\subsection{Arf invariants}
A classical knot invariant that is famously related to modulo $8$ reductions and to the Alexander and Jones polynomials is the Arf invariant.
Recall that $\Arf{K}$ is a concordance invariant with values in $\Z/2\Z$, which can be recovered from the signed determinant of the knot via Levine's formula:
\begin{theorem}[{\cite[p.\ 544]{L:Arf}}]
\label{thm:Levine}
For any knot $K \subset S^3$,
\[\det K \equiv 4 \cdot \Arf K + 1 \pmod8.\]
\end{theorem}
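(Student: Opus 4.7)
The plan is to deduce the formula by passing through the Conway polynomial. Recall that $\Delta_K(t) = \nabla_K(t^{1/2} - t^{-1/2})$ and that for a knot
\[
\nabla_K(z) = 1 + a_2(K)\, z^2 + a_4(K)\, z^4 + \cdots.
\]
Specializing at $t = -1$ gives $z = 2i$, and hence $z^{2k} = (-4)^k$. Therefore
\[
\det K = \Delta_K(-1) = 1 - 4\, a_2(K) + 16\, a_4(K) - 64\, a_6(K) + \cdots,
\]
and modulo $8$ every term beyond the first two vanishes, yielding
\[
\det K \equiv 1 + 4\, a_2(K) \pmod{8}.
\]

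It then suffices to establish Murasugi's congruence $a_2(K) \equiv \Arf K \pmod 2$. I would prove this by induction on the unknotting number. Both $a_2$ and $\Arf$ vanish on the unknot, so one only needs to show that each changes by the same amount modulo $2$ under a single crossing change $L_+ \leftrightarrow L_-$ on $K$. Let $L_0$ denote the oriented resolution at the crossing (a two-component link). The Conway skein relation $\nabla_{L_+}(z) - \nabla_{L_-}(z) = z\, \nabla_{L_0}(z)$ immediately gives
\[
a_2(L_+) - a_2(L_-) = \mathrm{lk}(L_0),
\]
since $\nabla_{L_0}$ has only odd powers of $z$ starting with the coefficient $\mathrm{lk}(L_0)$. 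On the Arf side, it is classical that $\Arf(L_+) - \Arf(L_-) \equiv \mathrm{lk}(L_0) \pmod 2$; this can be proved by tracking how the $\F_2$-quadratic refinement of the intersection form on a Seifert surface changes under the band surgery effected by the crossing change.

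Combining the two steps, $4\, a_2(K) \equiv 4\, \Arf K \pmod 8$, and thus $\det K \equiv 4\, \Arf K + 1 \pmod 8$. The main obstacle is the crossing-change behavior of the Arf invariant -- correctly identifying the change modulo $2$ with $\mathrm{lk}(L_0)$ -- which requires either a careful Seifert-form analysis or an appeal to the classical literature. Everything else is formal manipulation of the Conway polynomial.
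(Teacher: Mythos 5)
Your proposal is correct, and it follows exactly the route the paper itself points to: the paper cites Theorem~\ref{thm:Levine} as Levine's result and then, in the appendix, records Lickorish's unifying statement (Theorem~10.7 of \cite{L:KnotTheory}, i.e.\ $a_2(K) \equiv \Arf K \pmod 2$) together with the observation that Levine's formula follows from it by evaluating the Conway/Alexander polynomial at $t=-1$. You reproduce that deduction (with the harmless sign observation that $-4 \equiv 4 \pmod 8$), and you additionally sketch a proof of the ingredient $a_2 \equiv \Arf \pmod 2$ by skein-theoretic induction using $\nabla_{L_+}-\nabla_{L_-} = z\,\nabla_{L_0}$ and the classical Arf crossing-change formula $\Arf L_+ - \Arf L_- \equiv \lk(L_0) \pmod 2$, which the paper itself records as Equation~\eqref{eq:skeinArf}. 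So your argument is a slightly fuller version of the same idea; nothing is missing.
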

Recall that $\det K$ is obtained from the (Conway-normalised) Alexander or the Jones polynomial via the identities $\det K = \Delta_{K}(-1) = V_K(-1)$. If one uses $|\det K|$ instead, then the $+1$ in Levine's formula should be replaced by $\pm1$.

\begin{question}
\label{qn:ArfQ}
Let $H$ denote knot Floer homology or reduced Khovanov homology (over $\mathbb{F}_p$ or $\Z$). Is it true that for every knot $K \subset S^3$
\begin{equation}
\label{eq:ArfQ}
\rk H(K) \equiv 4 \cdot \Arf K \pm 1 \pmod8?
\end{equation}
\end{question}

Note that Equation \eqref{eq:ArfQ} holds for all quasi-alternating knots, by Levine's formula, and for all untwisted Whitehead doubles when $H = \HFKh$ (see Section \ref{sec:Whitehead}).

\begin{answer}\label{no Arfstyle rk mod 8}
No.  The counter-examples of Theorem \ref{thm:1 mod 8 false} also work here.
\end{answer}

\begin{remark}\label{rmk:no adjusted Arf either}
In fact the Knight Move knot from \cite{MM:knightmove} also gives a counter-example for Question \ref{qn:ArfQ}.  However, Theorem \ref{thm:1 mod 8 false} is actually showing a stronger statement: namely, that no concordance invariant can `fit' into Equation \eqref{eq:ArfQ} in place of $\Arf(K)$.
\end{remark}

\subsection{The reduction formula for the Jones polynomial}

Another way the Arf invariant can be recovered from the (symmetrised) Alexander polynomial is via Robertello's reduction formula below.

\begin{theorem}[{\cite[Theorem 6]{R:Arf}}]
\label{thm:R6}
For any knot $K \subset S^3$, 
\begin{itemize}
\item $\Arf(K) = 0$ if and only if $\Delta_{K}(t) \equiv 1 \pmod{2, 1+t^4}$;
\item $\Arf(K) = 1$ if and only if $\Delta_{K}(t) \equiv t^{-1}+1+t \pmod{2, 1+t^4}$.
\end{itemize}
\end{theorem}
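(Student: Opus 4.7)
The plan is to use the Conway-polynomial formulation of the Alexander polynomial together with the classical theorem identifying the Arf invariant with the second Conway coefficient modulo $2$.

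First, I would rewrite the (symmetrised) Alexander polynomial as the Conway polynomial $\nabla_K(z)$ via the substitution $z = t^{1/2} - t^{-1/2}$, so that $z^2 = t - 2 + t^{-1}$. For any knot $K$, only even powers of $z$ appear, and $\nabla_K(z) = 1 + c_2(K)\, z^2 + c_4(K)\, z^4 + \cdots$ for integer coefficients $c_{2i}(K)$. The classical Murasugi identity states
\[
\Arf(K) \equiv c_2(K) \pmod 2.
\]
This is the main classical input; it follows from the skein relation $\nabla_{L_+} - \nabla_{L_-} = z\,\nabla_{L_0}$ together with the fact that $c_2$ and $\Arf$ change in the same way modulo $2$ under a single crossing change (each by the linking number of the two components of the oriented resolution $L_0$), combined with both vanishing on the unknot.

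Second, I would compute the reductions of the powers $z^{2k}$ in the ring $\Z[t, t^{-1}]/(2, 1+t^4)$. Modulo $2$ we have $z^2 \equiv t + t^{-1}$. Squaring, and using that $t^4 \equiv 1$ forces $t^{-2} \equiv t^2$, gives
\[
z^4 \equiv (t + t^{-1})^2 \equiv t^2 + t^{-2} \equiv 2\, t^2 \equiv 0 \pmod{(2, 1+t^4)}.
\]
Hence $z^{2k} \equiv 0$ for every $k \geq 2$, and the Conway expansion collapses to
\[
\nabla_K(z) \equiv 1 + c_2(K)\,(t + t^{-1}) \pmod{(2, 1+t^4)}.
\]

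Finally, I would conclude by substituting the two possible values of $\Arf(K) \equiv c_2(K) \pmod 2$. If $\Arf(K) = 0$, then $c_2(K)$ is even, so $\Delta_K(t) \equiv 1$; if $\Arf(K) = 1$, then $c_2(K)$ is odd, so $\Delta_K(t) \equiv t^{-1} + 1 + t$, both modulo $(2, 1+t^4)$. The only nontrivial step is the classical Murasugi identity between $c_2$ and $\Arf$; the remainder of the argument is purely formal manipulation of the universal substitution $z = t^{1/2} - t^{-1/2}$, whose cleanness relies entirely on the fortunate collapse $z^4 \equiv 0$ modulo $(2, 1+t^4)$.
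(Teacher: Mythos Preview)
Your proposal is correct. The paper does not actually prove Theorem~\ref{thm:R6} in detail; it quotes the statement from Robertello and then remarks (in Section~A.3) that both Robertello's formula and Levine's formula ``can be easily deduced from'' Lickorish's result $a_2 \equiv \Arf(K) \pmod 2$ --- precisely the identity you invoke (and call the Murasugi identity). You have simply carried out that deduction explicitly: the collapse $z^4 \equiv 0 \pmod{2, 1+t^4}$ is the key computation, and it is right. So your approach is exactly the one the paper points to, filled in with the details the paper omits.
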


Since we did not find the analogous result for the Jones polynomial in the literature, we state and prove it here.

\begin{theorem}
\label{thm:R6Jones}
For any knot $K \subset S^3$, 
\begin{itemize}
\item $\Arf(K) = 0$ if and only if $\V_{K}(t) \equiv 1 \pmod{2, 1+t^4}$;
\item $\Arf(K) = 1$ if and only if $\V_{K}(t) \equiv t^{-1}+1+t \pmod{2, 1+t^4}$.
\end{itemize}
\end{theorem}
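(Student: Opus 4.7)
The plan is to reduce the congruence to an explicit evaluation of $V_K$ at the fourth roots of unity, via the discrete Fourier transform. Write $V_K(t) = \sum_j a_j t^j$ and set $b_k := \sum_{j \equiv k \pmod 4} a_j$, so the image of $V_K$ in $\F_2[t,t^{-1}]/(1+t^4)$ is $\sum_{k=0}^3 (b_k \bmod 2)\, t^k$. The theorem is then equivalent to showing that $(b_0,b_1,b_2,b_3) \bmod 2$ equals $(1,0,0,0)$ when $\Arf(K)=0$ and $(1,1,0,1)$ when $\Arf(K)=1$, since the two target residues $1$ and $1+t+t^{-1}$ in the quotient ring correspond to these coefficient patterns.

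First I would apply Fourier inversion on $\Z/4\Z$ to obtain
\[
b_k \ = \ \tfrac{1}{4}\sum_{m=0}^3 i^{-km}\, V_K(i^m),
\]
which reduces the problem to the four evaluations $V_K(1)$, $V_K(-1)$, $V_K(\pm i)$. Three classical identities supply these: $V_K(1)=1$ by the Jones normalisation, $V_K(-1) = \det K$, and, for knots, Murakami's formula $V_K(i) = (-1)^{\Arf K}$. Because $V_K$ has integer coefficients one also has $V_K(-i) = \overline{V_K(i)} = (-1)^{\Arf K}$. Substituting and simplifying gives
\[
b_0 = \tfrac14\bigl(1 + \det K + 2(-1)^{\Arf K}\bigr), \quad b_1 = b_3 = \tfrac14(1 - \det K), \quad b_2 = \tfrac14\bigl(1 + \det K - 2(-1)^{\Arf K}\bigr).
\]

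The remaining work is arithmetic modulo $2$. Using Levine's formula (Theorem~\ref{thm:Levine}) to write $\det K = 1 + 4\Arf(K) + 8n$ for some $n \in \Z$, together with the trivial identity $2(-1)^{\Arf K} = 2 - 4\Arf(K)$ valid for $\Arf(K) \in \{0,1\}$, direct substitution yields $b_0 \equiv 1$, $b_2 \equiv 0$, and $b_1 \equiv b_3 \equiv \Arf(K) \pmod 2$. This translates to
\[
V_K(t) \ \equiv \ 1 + \Arf(K)\bigl(t + t^{-1}\bigr) \pmod{(2,\,1+t^4)},
\]
which is exactly the claim.

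The one substantive ingredient is Murakami's evaluation $V_K(i) = (-1)^{\Arf K}$; everything else is formal manipulation with the DFT and Levine's formula, so I do not expect a serious obstacle. If one preferred a self-contained argument that did not invoke Murakami by name, the same evaluation can be reproved here by a short induction on crossing number via the Jones skein relation specialised at $t = i$, paired with the standard skein change behaviour of the Arf invariant on proper links; this would lengthen the appendix slightly but introduce no new ideas.
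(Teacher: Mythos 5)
Your argument is correct but takes a genuinely different route from the paper. The paper proves Theorem~\ref{thm:R6Jones} via the stronger companion statement Theorem~\ref{thm:R6JonesX} (which adds a parallel claim for $2$-component links), by a double induction: on crossing number of a diagram, and within that on the number of crossing changes needed to reach an unknotted or split diagram, pushing the Jones skein relation through the quotient ring $\F_2[t,t^{-1}]/(1+t^4)$ and using the small Lemma~\ref{lem:1+t} about multiplication by $1+t$. You instead observe that reduction modulo $(2, 1+t^4)$ is determined by the coefficient sums in residue classes mod~$4$, recover these sums by discrete Fourier inversion over $\Z/4\Z$ from the four evaluations $V_K(1)=1$, $V_K(-1)=\det K$, $V_K(\pm i)=(-1)^{\Arf K}$, and then finish with Levine's formula. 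The arithmetic checks out (the identity $2(-1)^{\Arf K}=2-4\Arf K$ for $\Arf K\in\{0,1\}$ and the substitution $\det K=1+4\Arf K+8n$ do give $b_0\equiv1$, $b_2\equiv0$, $b_1\equiv b_3\equiv\Arf K\pmod 2$, which is the claimed residue pattern since $t^{-1}=t^3$ in the quotient). Your route is shorter and conceptually cleaner, at the cost of invoking two external black boxes: Levine's formula (Theorem~\ref{thm:Levine}, already in the paper) and the evaluation $V_K(i)=(-1)^{\Arf K}$, which is precisely Theorem~\ref{thm:LickorishArf}. Note that in the paper this last theorem is recorded only \emph{after} Theorem~\ref{thm:R6Jones} is proved, so the authors' inductive proof is deliberately self-contained and does not presuppose it; your remark that the Murakami evaluation could itself be reproved by the same skein-plus-Arf induction is exactly the point of contact between the two proofs, as the paper essentially carries out that induction in a form that simultaneously yields the mod-$(2,1+t^4)$ statement for knots and for two-component links.
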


Recall that the Jones polynomial $\Jones_L(t)$ of a link $L$ satisfies the skein relation
\begin{equation}
\label{eq:skeinJones}
t^{-1} \cdot \J_{L_+}(t) - t \cdot \J_{L_-}(t) = (t^{1/2} - t^{-1/2}) \cdot \J_{L_0}(t),
\end{equation}
where $L_0$ denotes the oriented resolution at the given crossing.

If $L_+$ is a knot, then
\begin{equation}
\label{eq:skeinArf}
\Arf{K_+} + \Arf{K_-} \equiv \lk{L_0} \pmod 2,
\end{equation}
where $\lk{L_0}$ denotes the linking number of the 2 components of $L_0$.

The following theorem immediately implies Theorem \ref{thm:R6Jones}.

\begin{theorem}
\label{thm:R6JonesX}
For any knot $K \subset S^3$, 
\begin{itemize}
\item $\Arf(K) = 0$ if and only if $\V_{K}(t) \equiv 1 \pmod{2, 1+t^4}$;
\item $\Arf(K) = 1$ if and only if $\V_{K}(t) \equiv t^{-1}+1+t \pmod{2, 1+t^4}$.
\end{itemize}
For any 2-component link $L \subset S^3$,
\begin{itemize}
\item $\lk(L) \equiv 0 \pmod2$ if and only if $t^{1/2} \cdot \V_{L}(t) \equiv u^2 \cdot (1+t) \pmod{2, 1+t^4}$ for some invertible $u \in \Z[t, t^{-1}]$;
\item $\lk(L) \equiv 1 \pmod2$ if and only if $t^{1/2} \cdot \V_{L}(t) \equiv u \cdot (1+t^2) \pmod{2, 1+t^4}$ for some invertible $u \in \Z[t, t^{-1}]$.
\end{itemize}
\end{theorem}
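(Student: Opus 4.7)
The plan is to prove both parts simultaneously by induction using the Jones skein relation, which reduces modulo $2$ to
\[
V_{L_+} \equiv t^2 V_{L_-} + t^{1/2}(1+t) V_{L_0} \pmod 2.
\]
The base cases are straightforward: the unknot has $V = 1$ and $\Arf = 0$, while the $2$-component unlink has $t^{1/2} V \equiv 1+t$ and $\lk = 0$ (taking $u = 1$).

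A crucial parity observation simplifies the case analysis: since $V_L(t) \in t^{(n-1)/2}\Z[t,t^{-1}]$ for an $n$-component link $L$, the skein relation automatically forces that when $L_\pm$ are knots, $L_0$ has exactly $2$ components; and when $L_\pm$ are $2$-component links, $L_0$ has either $1$ or $3$ components. Consequently, for the knot induction $L_0$ is already controlled by the $2$-link half of the statement. For the $2$-link induction I would apply the skein at a between-components crossing, so that $L_0$ is a knot controlled by the knot half; when no such crossing exists, the link is split and I would handle that case directly via the multiplicative formula $V_{L_1 \sqcup L_2} = -(t^{1/2}+t^{-1/2})V_{L_1}V_{L_2}$, which modulo $2$ reduces to $t^{1/2} V_L \equiv (1+t)V_{L_1}V_{L_2}$ and combines with the knot statement (using $(1+t+t^3)^2 \equiv 1$) to give the $u^2(1+t)$ form consistent with $\lk = 0$. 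The Arf invariant and linking number propagate through the skein via Equation~\eqref{eq:skeinArf}, namely $\Arf(K_+) + \Arf(K_-) \equiv \lk(L_0) \pmod 2$, together with the fact that a crossing change between the two components of a $2$-link alters $\lk$ by $\pm 1$.

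The core of the argument is then a short algebraic computation. In both inductive steps, the skein contribution $t^{1/2}(1+t) V_{L_0} = (1+t)(t^{1/2} V_{L_0})$ viewed modulo $(2, 1+t^4)$ depends only on the parity of the appropriate invariant and not on the specific representative within the orbit under units of $\Z[t,t^{-1}]$: for instance, both even-linking residues $1+t$ and $t^2+t^3$ for $t^{1/2} V_{L_0}$ yield $1+t^2$ after multiplication by $(1+t)$, using $(1+t)^2 = 1+t^2$ together with $t^4 \equiv 1$. A short case check across the four combinations of inductive values in each step then confirms the predicted form of $V_{L_+}$ modulo $(2, 1+t^4)$. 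The main obstacle I anticipate is not conceptual but combinatorial: carefully matching each possibility for $(\Arf, \lk)$ on the two sides of each skein triangle. All computations are streamlined by the identities $(1+t)^2 \equiv 1+t^2$, $(1+t^2)^2 \equiv 0$, and $(1+t+t^3)^2 \equiv 1$ in the local ring $\F_2[t]/(1+t^4)$.
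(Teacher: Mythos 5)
Your proposal is correct and is essentially the paper's own argument: induction driven by the mod-2 skein relation, the propagation rule $\Arf(K_+)+\Arf(K_-)\equiv\lk(L_0)\pmod 2$, the split-union multiplicativity of $V$ for the 2-link base case, and the same small computations in $\F_2[t]/(1+t^4)$. The one point you leave implicit that the paper makes explicit is that the induction must be nested to be well-founded — an outer induction on crossing number (which decreases for $L_0$) and an inner induction on the number of crossing changes to reach the unknot (for knots) or a split diagram (for 2-component links), since $L_+$ and $L_-$ in each skein step have the same crossing count.
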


By $a \equiv b \pmod{2, 1+t^4}$ we mean that the images of $a$ and $b$ are the same in the ring $\Z[t,t^{-1}]/(2, 1+t^4)$.

\begin{lemma}
\label{lem:1+t}
The linear map
\[
\phi_{1+t} \colon \F_2[t]/(1+t^4) \to \F_2[t]/(1+t^4)
\]
given by multiplication by $1+t$ has rank 3, and its kernel is generated by $1+t+t^2+t^3$.
\end{lemma}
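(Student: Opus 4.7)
The key observation is that in characteristic~$2$ the Frobenius gives
\[
1+t^4 = (1+t)^4,
\]
so the ring $R := \F_2[t]/(1+t^4)$ is naturally isomorphic to $\F_2[s]/(s^4)$ via the substitution $s = 1+t$. Under this isomorphism, the map $\phi_{1+t}$ becomes multiplication by $s$ on $\F_2[s]/(s^4)$. The plan is simply to analyze this multiplication-by-$s$ map, which is a standard linear-algebra exercise on a local Artinian ring.

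First, I would note that $\{1, s, s^2, s^3\}$ is an $\F_2$-basis for $\F_2[s]/(s^4)$, and multiplication by $s$ sends $s^i \mapsto s^{i+1}$ for $i = 0,1,2$ and $s^3 \mapsto 0$. Hence the image is spanned by $\{s, s^2, s^3\}$, which is $3$-dimensional, and the kernel is spanned by $\{s^3\}$, which is $1$-dimensional. This gives both the rank statement and the description of the kernel.

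Finally, to translate the kernel generator back to the variable $t$, I would compute
\[
s^3 = (1+t)^3 = 1 + 3t + 3t^2 + t^3 \equiv 1 + t + t^2 + t^3 \pmod{2},
\]
which is exactly the claimed generator. There is no real obstacle here; the only thing to get right is the initial identification $1+t^4 = (1+t)^4$ in characteristic~$2$, after which everything reduces to transparent computations in $\F_2[s]/(s^4)$.
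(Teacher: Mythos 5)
Your proof is correct. The paper gives no details here---it simply remarks that ``the proof is a straightforward computation in linear algebra,'' presumably meaning one writes out the $4\times 4$ matrix of multiplication by $1+t$ in the basis $\{1,t,t^2,t^3\}$ and row-reduces. Your change of variables $s=1+t$, using $(1+t)^4=1+t^4$ in characteristic $2$, is a cleaner way to organize the same computation: it diagonalizes the situation into the nilpotent Jordan block $\F_2[s]/(s^4)$, where the rank and kernel of multiplication by $s$ are immediate, and the final translation $s^3=1+t+t^2+t^3$ recovers the stated kernel generator. Both routes are elementary and land in the same place; yours has the small advantage of avoiding any explicit matrix manipulation.
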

\begin{proof}
The proof is a straightforward computation in linear algebra.
\end{proof}

\begin{proof}[Proof of Theorem \ref{thm:R6JonesX}]
We argue by induction on the number of crossings of a diagram $D$ for the knot or link at hand. If there are no crossings, then $D$ is a diagram of the unknot or the 2-component unlink, and we are done.

Suppose now that the theorem holds for all knots or 2-component links with a diagram with $< n$ crossings. First we will prove that the theorem holds for all knots with a diagram with $n$ crossings, then we will move to 2-component links.

For every knot diagram $D$ with $n$ crossings, there is a sequence of crossing changes that turns $D$ into a diagram of the unknot; thus, there is a finite sequence of diagrams $D_0, D_1, \ldots, D_k = D$ such that $D_0$ is a diagram for the unknot, and $D_i$ and $D_{i+1}$ differ only for a crossing change.
We nest an argument by induction on $k$: the theorem holds for the knot represented by the diagram $D_0$ (i.e.\ the unknot).
For the induction step suppose that $L_+$ and $L_-$ are the two knots associated to $D_{k-1}$ and $D_{k}$ (not necessarily in this order), and let $L_0$ be the link associated to the oriented smoothing of the projection. Note that $L_0$ has a diagram with $<n$ crossings. Suppose that $\lk(L_0) = 0$, so $\Arf{L_+}=\Arf{L_-}$. By the skein relation \eqref{eq:skeinJones} and the main induction hypothesis, after simplifying we get
\[
t^{-1} \cdot \V_{L_+} + t \cdot \V_{L_-} \equiv t+t^3 \pmod{2, 1+t^4}.
\]
Solving e.g.\ for $\V_{L_+}$ yields
\begin{equation}
\label{eq:solvingforVL_+}
\V_{L_+} \equiv t^2 \cdot \V_{L_-} + 1+t^2 \pmod{2, 1+t^4}.
\end{equation}
From here it is easy to check that
\begin{itemize}
\item $\V_{L_-}(t) \equiv 1 \pmod{2, 1+t^4}$ implies $\V_{L_+}(t) \equiv 1 \pmod{2, 1+t^4}$, and
\item $\V_{L_-}(t) \equiv t^{-1}+1+t \pmod{2, 1+t^4}$ implies $\V_{L_+}(t) \equiv t^{-1}+1+t \pmod{2, 1+t^4}$.
\end{itemize}
Solving for $\V_{L_-}$ yields the same equation as \eqref{eq:solvingforVL_+} with $\V_{L_-}$ and $\V_{L_+}$ swapped. Thus, it does not matter in which order $L_-$ and $L_+$ are associated to $D_{k-1}$ and $D_{k}$.

Suppose instead that $\lk(L_0) = 1$, so $\Arf{L_+} \neq \Arf{L_-}$. Arguing as above (solving e.g.\ for $V_{L_+}$) we obtain
\begin{equation*}
\V_{L_+} \equiv t^2 \cdot \V_{L_-} + 1 + t + t^2 + t^3 \pmod{2, 1+t^4}.
\end{equation*}
From here we easily see that
\begin{itemize}
\item $\V_{L_-}(t) \equiv 1 \pmod{2, 1+t^4}$ implies $\V_{L_+}(t) \equiv t^{-1}+1+t \pmod{2, 1+t^4}$, and
\item $\V_{L_-}(t) \equiv t^{-1}+1+t \pmod{2, 1+t^4}$ implies $\V_{L_+}(t) \equiv 1 \pmod{2, 1+t^4}$.
\end{itemize}
The reasoning solving for $V_{L_-}$ is completely analogous.

We now turn our attention to 2-component links with $\leq n$ crossings.
If there are no crossings involving both components of a given link $L$, then the link is a split union $K_1 \sqcup K_2$ (so $\lk L = 0$), and its Jones polynomial is
\[
\V_L(t) = -(t^{1/2}+t^{-1/2})\cdot \V_{K_1}(t) \cdot \V_{K_2}(t).
\]
Thus, we obtain
\[
t^{1/2}\cdot \V_{L}(t) \equiv (1+t) \cdot \V_{K_1}(t) \cdot \V_{K_2}(t) \pmod{2, 1+t^4}.
\]
Both $K_1$ and $K_2$ have $\leq n$ crossings, so by inductive hypothesis the product $\V_{K_1}(t) \cdot \V_{K_2}(t)$ is either $1$ or $t^{-1} + 1 + t$. The former is obviously the square of an invertible element in $\F_2[t]/(1+t^4)$, whereas the latter can be replaced by $t^2$, since $(t^{-1} + 1 + t) + t^2 \in \ker \phi_{1+t}$ (see Lemma \ref{lem:1+t}). In all cases, we get the formula
\[
t^{1/2}\cdot \V_{L}(t) \equiv u^2 \cdot (1+t) \pmod{2, 1+t^4}.
\]

Finally, we consider the case of a 2-component link diagram $D$ with $\leq n$ crossings and at least one crossing between the two components.
As we did for knots, we nest an induction argument, this time on the number of crossing changes needed to turn $D$ into a diagram of a split link. The base of the induction is given by the preceding paragraph. For the induction step, pick a sequence of link diagrams $D_0, \ldots, D_k=D$ such that each one is related to the previous one by a crossing change involving both link components. 
Suppose that $D_k$ is a diagram for $L_+$ and $D_{k-1}$ is a diagram for $L_-$ (we will deal with the other case later). By solving the skein relation for $\V_{L_+}$ one gets
\begin{equation}
\label{eq:skeininproofforlinks}
t^{1/2} \cdot \V_{L_+}(t) \equiv t^2 \cdot (t^{1/2}\cdot \V_{L_-}(t)) + (t+t^2)\cdot \V_{L_0}(t) \pmod{2, 1+t^4}.
\end{equation}
Note that $\lk{L_+} + \lk{L_-} \equiv 1 \pmod 2$ and that $L_0$ is a knot.
If $\lk{L_-} \equiv 0 \pmod 2$ (hence $\lk{L_+} \equiv 1 \pmod 2$), the first summand in Equation \eqref{eq:skeininproofforlinks} is $1+t$ or $t^2+t^3$, while the second summand is $t + t^2$ or $1+t^3$ (depending on the Arf invariant of $L_0$). In all cases, one gets 
\[
t^{1/2} \cdot \V_{L_+}(t) \equiv 1+t^2 \textrm{ or } t+t^3,
\]
as desired.
If instead $\lk{L_-} \equiv 1 \pmod 2$ (hence $\lk{L_+} \equiv 0 \pmod 2$), the first summand in Equation \eqref{eq:skeininproofforlinks} is $1+t^2$ or $t+t^3$, while the second summand is $t + t^2$ or $1+t^3$ (again depending on the Arf invariant of $L_0$). In all cases, one gets 
\[
t^{1/2} \cdot \V_{L_+}(t) \equiv 1+t \textrm{ or } t^2+t^3,
\]
as desired.
Lastly, if $D_k$ is a diagram for $L_-$ and $D_{k-1}$ is a diagram for $L_+$, by solving the skein relation for $\V_{L_-}$ one gets
\begin{equation*}
t^{1/2} \cdot \V_{L_-}(t) \equiv t^2 \cdot (t^{1/2}\cdot \V_{L_+}(t)) + (1+t^3)\cdot \V_{L_0}(t) \pmod{2, 1+t^4}
\end{equation*}
instead of \eqref{eq:skeininproofforlinks}, and the proof is completely analogous.
\end{proof}

The following corollary is a Jones polynomial analogue of \cite[Theorem 7]{R:Arf}. Its proof is immediate from the statement of Theorem \ref{thm:R6JonesX}.

\begin{corollary}
For a knot $K \subset S^3$, let $\V_K(t) = \sum_{i} c_i t^i$. Then
\[
\Arf K \equiv \sum_{i \equiv 1 \pmod4} c_i \equiv \sum_{i \equiv -1 \pmod 4} c_i \pmod 2.
\]
\end{corollary}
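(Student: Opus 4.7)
The plan is to deduce the corollary directly from Theorem \ref{thm:R6JonesX} by explicitly unpacking the congruence class of $\V_K(t)$ in the ring $\F_2[t,t^{-1}]/(1+t^4)$. Since for a knot $K$ the Jones polynomial $\V_K(t)$ lies in $\Z[t,t^{-1}]$, reducing modulo $2$ gives a Laurent polynomial over $\F_2$, which I further reduce modulo $1+t^4$. In the quotient ring, $t$ is invertible (with $t^{-1}=t^3$), so every monomial $t^i$ collapses to $t^{i \bmod 4}$. Writing $A_j := \sum_{i \equiv j \pmod 4} c_i \bmod 2$ for $j=0,1,2,3$, this gives
\[
\V_K(t) \equiv A_0 + A_1 t + A_2 t^2 + A_3 t^3 \pmod{2,\,1+t^4}.
\]

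Now I would simply match this expansion against the two cases in Theorem \ref{thm:R6JonesX}. If $\Arf K = 0$, the theorem says $\V_K \equiv 1$, hence $A_1 = A_3 = 0$. If $\Arf K = 1$, the theorem gives $\V_K \equiv t^{-1}+1+t \equiv 1 + t + t^3$ (using $t^{-1}\equiv t^3$), hence $A_1 = A_3 = 1$. In both cases, $A_1 \equiv A_3 \equiv \Arf K \pmod 2$. Since $i \equiv -1 \pmod 4$ is the same condition as $i \equiv 3 \pmod 4$, this is exactly
\[
\Arf K \equiv \sum_{i \equiv 1 \pmod 4} c_i \equiv \sum_{i \equiv -1 \pmod 4} c_i \pmod 2,
\]
as desired.

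There is no real obstacle here: the only care needed is the bookkeeping for negative exponents (the identification $t^{-1}\equiv t^3$ in the quotient) and the observation that the expressions $1$ and $1+t+t^{-1}$ of Theorem \ref{thm:R6JonesX} are already in normal form with respect to the basis $\{1,t,t^2,t^3\}$ of $\F_2[t]/(1+t^4)$, so reading off coefficients is unambiguous. The coefficients $A_0$ and $A_2$ play no role in the statement, though one may note incidentally that Theorem \ref{thm:R6JonesX} forces $A_0 = 1$ and $A_2 = 0$ always, corresponding to $\V_K(1) = 1$ for any knot.
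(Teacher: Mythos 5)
Your proof is correct and is precisely the unpacking the paper has in mind when it declares the corollary ``immediate from the statement of Theorem~\ref{thm:R6JonesX}'': reduce $\V_K$ in $\F_2[t]/(1+t^4)$, collapse exponents mod~$4$ (using $t^{-1}\equiv t^3$), and read off the coefficients of $t$ and $t^3$ from the two normal forms $1$ and $1+t+t^3$.
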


\subsection{Lickorish's unifying statement for the Alexander polynomial}
Recall that the \emph{Conway potential} of a link $L$ is a polynomial
\[
\nabla_L(z) = a_0 + a_1 z + a_2 z^2 + \ldots
\]
If $K$ is a knot, then
\[
\nabla_K(z) = 1 + a_2 z^2 + a_4 z^4 + \ldots
\]
By substituting $z \mapsto t^{-1/2}-t^{1/2}$, one recovers the (Conway-normalised) Alexander polynomial
\[
\Delta_K(t) = \nabla_K(t^{-1/2}-t^{1/2}).
\]

Recall that the Arf invariant can be recovered from the Alexander polynomial in two ways, by Levine's formula (Theorem \ref{thm:Levine}) or Robertello's reduction formula (Theorem \ref{thm:R6}). It turns out that both formulas can be easily deduced from the following unifying result of Lickorish.

\begin{theorem}[{\cite[Theorem 10.7]{L:KnotTheory}}]
For a knot $K$, $a_2 \equiv \Arf(K) \pmod 2$, where $a_2$ is the coefficient of $z^2$ in the Conway potential.
\end{theorem}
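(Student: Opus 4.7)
The plan is to prove that $a_2(K) - \Arf(K) \pmod 2$ is a crossing-change invariant of knots, and then observe that it vanishes on the unknot; since every knot can be unknotted by a finite sequence of crossing changes, this is enough. Let me outline how I would set this up.

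First, I would use the standard skein relation for the Conway potential,
\[
\nabla_{K_+}(z) - \nabla_{K_-}(z) = z \cdot \nabla_{L_0}(z),
\]
where $K_+$ and $K_-$ are two knots differing at a single crossing and $L_0$ is the oriented resolution at that crossing. When $K_+$ (and hence $K_-$) is a knot, $L_0$ is a $2$-component link, and a classical result of Hoste says that for such a link one has $\nabla_{L_0}(z) = \lk(L_0)\,z + O(z^3)$. (This in turn can be proved by a short induction on crossings using the same skein relation, starting from $\nabla_{\text{unlink}} = 0$ and $\nabla_{\text{Hopf}^\pm} = \pm z$.) Extracting the coefficient of $z^2$ from the skein relation therefore gives the key identity
\[
a_2(K_+) - a_2(K_-) = \lk(L_0).
\]

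Next, I would invoke the Arf-invariant skein formula already recorded in the paper as Equation \eqref{eq:skeinArf},
\[
\Arf(K_+) + \Arf(K_-) \equiv \lk(L_0) \pmod 2,
\]
which (since we are working modulo $2$) rewrites as $\Arf(K_+) - \Arf(K_-) \equiv \lk(L_0) \pmod 2$. Combining this with the identity for $a_2$ yields
\[
a_2(K_+) - \Arf(K_+) \equiv a_2(K_-) - \Arf(K_-) \pmod 2,
\]
so the quantity $a_2(K) - \Arf(K) \pmod 2$ is unchanged by any crossing change.

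Finally, for the unknot $U$ we have $\nabla_U(z) = 1$, so $a_2(U) = 0$, and also $\Arf(U) = 0$. Since every knot $K$ can be converted to $U$ by a finite sequence of crossing changes, the invariance established above propagates the equality $a_2 \equiv \Arf \pmod 2$ from $U$ to $K$, completing the proof. The only mildly nontrivial step is the verification that $\nabla_{L_0}(z) = \lk(L_0)\,z + O(z^3)$ for a $2$-component link, but this is a well-known consequence of Hoste's parity theorem and can be cited or proved by a brief induction; no other obstacle is anticipated.
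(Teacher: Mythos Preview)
Your argument is correct and is in fact the standard proof: the skein relation for $\nabla$ combined with the fact that the $z$-coefficient of $\nabla_{L_0}$ is $\lk(L_0)$ gives $a_2(K_+)-a_2(K_-)=\lk(L_0)$, which matches the Arf skein relation \eqref{eq:skeinArf} modulo $2$, and an unknotting induction finishes it. Note, however, that the paper does not supply its own proof of this theorem; it simply quotes it from Lickorish's textbook \cite[Theorem~10.7]{L:KnotTheory}, where the proof given is essentially the one you wrote down.
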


\subsection{A unifying statement for the Jones polynomial?}
As in the case of the Alexander polynomial, we can recover the Arf invariant from the Jones polynomial by the reduction formula (cf.\ Theorem \ref{thm:R6Jones}), and also by the usual Levine formula (cf.\ Theorem \ref{thm:Levine}), using the fact that the determinant of a knot can be recovered by the Jones polynomial via the identity $\det K = \Jones_K(-1)$.

It is natural to ask whether there is a unifying statement as in the case of the Alexander polynomial. As an indication of the relation of the two statements, we note that the strategy of the proof of Theorem \ref{thm:R6JonesX} can be followed step by step, but using the skein relation
\[
\det{L_+} - \det{L_-} = 2i\det{L_0},
\]
to prove Levine's formula:
\[
\det K \equiv 4\Arf K +1 \pmod 8 \qquad \text{and} \qquad \det L \equiv 2 i \lk L \pmod 2,
\]
where $K$ is any knot and $L$ is any 2-component link.

For the Jones polynomial, there is an extra statement that relates to the Arf invariant, which holds not only for knots but also for links.

Recall that a link $L$ is \emph{proper} if for all component $K$ of $L$
\[
\lk(K, L \setminus K) \equiv 0 \pmod 2.
\]
In particular, any knot is a proper link.

For a proper link $L$, we denote its Arf invariant by $\Arf L$. Such an invariant is not defined for improper links.

\begin{theorem}[{\cite[Theorem 10.6]{L:KnotTheory}}]
\label{thm:LickorishArf}
For a link $L \subset S^3$,
\[
\V_L(i) =
\begin{cases}
(-\sqrt2)^{\# L -1} \cdot (-1)^{\Arf L} & \textrm{if $L$ is proper}\\
0 & \textrm{otherwise}\\
\end{cases}
\]
\end{theorem}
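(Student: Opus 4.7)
The plan is to prove this by induction on the crossing number of a diagram for $L$, reducing the theorem to the statement that both sides satisfy the same skein recursion at $t=i$.

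For the base case, I would evaluate directly on the $n$-component unlink $U_n$: multiplicativity under disjoint unions gives $V_{U_n}(t) = (-t^{1/2}-t^{-1/2})^{n-1}$, so $V_{U_n}(i) = (-\sqrt 2)^{n-1}$, agreeing with the claimed formula since $U_n$ is proper with $\Arf U_n = 0$.

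Next I would specialise the Jones skein relation $t^{-1}V_{L_+}(t) - t V_{L_-}(t) = (t^{1/2}-t^{-1/2})V_{L_0}(t)$ at $t = i$. With the natural choice $t^{1/2} = e^{i\pi/4}$, one has $t^{-1} = -i$ and $t^{1/2}-t^{-1/2} = i\sqrt 2$, so the skein relation collapses to
\[
V_{L_+}(i) + V_{L_-}(i) = -\sqrt 2 \cdot V_{L_0}(i).
\]
Denoting by $f(L)$ the right-hand side of the theorem, it suffices to verify that $f(L_+) + f(L_-) = -\sqrt 2 \cdot f(L_0)$ in every skein triple, since then any diagram can be reduced to an unlink by a sequence of crossing changes and the induction closes.

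The core of the proof is a case analysis on the crossing. There are two geometric cases: (a) a self-crossing of a single component $K$, so $\#L_0 = \#L_\pm + 1$ (the component $K$ splits into $K', K''$ in $L_0$); and (b) a crossing between two distinct components $K_1, K_2$, so $\#L_0 = \#L_\pm - 1$ (the components merge in $L_0$). In each case elementary linking-number arithmetic determines which of $L_+, L_-, L_0$ are proper: in (a), $L_+$ is proper iff $L_-$ is, and when they are, the properness of $L_0$ is controlled by the parity of $\lk(K',K'') + \sum_i \lk(K',J_i)$, where $J_i$ are the remaining components; in (b), $L_+$ and $L_-$ can never both be proper, and $L_0$ is proper iff exactly one of $L_\pm$ is. When two or more of the three links are improper, the recursion degenerates to a trivial identity that one checks numerically. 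When all three are proper, the matching of $f$-values reduces to the classical skein-type identities for the Arf invariant of a proper link (cf.\ Robertello \cite{R:Arf}), namely $\Arf L_+ \equiv \Arf L_- \equiv \Arf L_0 \pmod 2$ in case (a, all proper) and $\Arf L_+ \equiv \Arf L_0 \pmod 2$ in case (b) with $L_+$ the proper one.

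The main obstacle I expect is the Arf bookkeeping. The delicate subcase is (a) with $L_\pm$ both proper and $L_0$ improper: the required identity $f(L_+) + f(L_-) = 0$ translates to $\Arf L_+ \not\equiv \Arf L_- \pmod 2$, and this parity difference must be extracted from exactly the linking-number obstruction that makes $L_0$ improper. The needed statement, that the Arf-parity change under a self-crossing change within a proper link equals $\lk(K',K'') + \sum_i \lk(K',J_i) \pmod 2$, can be established directly from Robertello's original definition by choosing an unknotting sequence and tracking both sides through each individual crossing change; this is where most of the concrete effort of the proof concentrates.
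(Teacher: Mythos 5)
The paper does not give its own proof of Theorem~\ref{thm:LickorishArf}; it simply quotes the result from Lickorish's book. Your outline is correct and is essentially the standard skein-theoretic proof (which is also the methodology the paper itself uses for the parallel Theorem~\ref{thm:R6JonesX}): specialise the Jones skein relation at $t=i$ to obtain $\V_{L_+}(i)+\V_{L_-}(i)=-\sqrt2\,\V_{L_0}(i)$, verify the base case on unlinks, and run a double induction (outer on crossing number so that $L_0$ is covered, inner on the number of crossing changes to a trivial diagram). Your properness bookkeeping is right: in case~(a) properness of $L_\pm$ agree, all $J_i$ stay proper in $L_0$, and $L_0$ is proper iff $\lk(K',K'')+\sum_i\lk(K',J_i)$ is even; in case~(b) one of $L_\pm$ is proper exactly when $L_0$ is, and then the right-hand side $f$ matches once one has the identities $\Arf L_+ = \Arf L_0$ (case~b, both proper), $\Arf L_+ = \Arf L_- = \Arf L_0$ (case~a, all proper), and $\Arf L_+\ne\Arf L_-$ (case~a, $L_0$ improper).

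The one place your sketch is slightly circular is the final paragraph: ``choosing an unknotting sequence and tracking both sides through each individual crossing change'' presupposes exactly the self-crossing Arf skein identity for proper links that you are trying to establish. The standard fix is close to what you wrote but routes through knots instead: use Robertello's definition $\Arf L := \Arf\kappa$ for any band fusion $\kappa$ of the proper link $L$ to a knot. Choosing the bands so that every $J_i$ is fused onto the $K'$ side, the fusions $\kappa_\pm$ of $L_\pm$ differ by the same self-crossing change, and the knot-level skein relation $\Arf\kappa_+ + \Arf\kappa_- \equiv \lk\kappa_0 \pmod 2$ gives $\lk\kappa_0 = \lk(K',K'') + \sum_i \lk(K'',J_i)$, which by properness of $L_+$ equals $\lk(K',K'') + \sum_i \lk(K',J_i) \pmod 2$, as needed. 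The case~(b) identity $\Arf L_+ = \Arf L_0$ is immediate from the same band-fusion definition, since $L_0$ is a fusion of $L_+$. With that substitution your argument is complete; it also requires the well-definedness of $\Arf$ on proper links (Robertello), which you correctly cite.
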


\begin{question}
Is there a unifying statement for Theorems \ref{thm:R6Jones}, \ref{thm:Levine}, and \ref{thm:LickorishArf}?
\end{question}

\section{PD codes of the counter-examples}
\label{app: PD codes}

Here are the planar diagram (PD) codes of the four knots in
Section~\ref{sec:Computations} where $\rk \rKh_\Q$ is $5 \pmod{8}$:

{\footnotesize
\begin{verbatim}
18nh_00159590:
  [[16, 2, 17, 1], [2, 7, 3, 8], [10, 3, 11, 4], [23, 4, 24, 5], 
   [5, 24, 6, 25], [6, 11, 7, 12], [27, 8, 28, 9], [9, 28, 10, 29], 
   [12, 32, 13, 31], [32, 14, 33, 13], [14, 36, 15, 35], [36, 16, 1, 15],
   [17, 21, 18, 20], [33, 19, 34, 18], [19, 35, 20, 34], [26, 21, 27, 22], 
   [22, 29, 23, 30], [30, 25, 31, 26]]
\end{verbatim}
 }

{\footnotesize
\begin{verbatim}
18nh_00752242:
  [[10, 2, 11, 1], [2, 26, 3, 25], [3, 32, 4, 33], [33, 4, 34, 5],
   [20, 5, 21, 6], [6, 23, 7, 24], [7, 15, 8, 14], [15, 9, 16, 8],
   [36, 10, 1, 9], [11, 17, 12, 16], [17, 13, 18, 12], [13, 19, 14, 18],
   [24, 19, 25, 20], [21, 28, 22, 29], [29, 22, 30, 23], [26, 31, 27, 32],
   [34, 27, 35, 28], [30, 35, 31, 36]]
\end{verbatim}
}

{\footnotesize
\begin{verbatim}
19nh_000129633:
  [[30, 2, 31, 1], [2, 21, 3, 22], [3, 33, 4, 32], [33, 5, 34, 4], 
   [5, 29, 6, 28], [6, 15, 7, 16], [16, 7, 17, 8], [8, 27, 9, 28],
   [36, 9, 37, 10], [25, 10, 26, 11], [11, 38, 12, 1], [12, 17, 13, 18],
   [18, 13, 19, 14], [14, 19, 15, 20], [29, 21, 30, 20], [22, 32, 23, 31],
   [34, 24, 35, 23], [24, 36, 25, 35], [37, 27, 38, 26]]
\end{verbatim}
}

{\footnotesize
\begin{verbatim}
19nh_000305767:
  [[1, 9, 2, 8], [11, 3, 12, 2], [3, 30, 4, 31], [21, 5, 22, 4], 
   [34, 5, 35, 6], [6, 24, 7, 23], [7, 15, 8, 14], [26, 10, 27, 9],
   [10, 28, 11, 27], [12, 31, 13, 32], [32, 13, 33, 14], [15, 36, 16, 37],
   [37, 16, 38, 17], [17, 38, 18, 1], [18, 25, 19, 26], [19, 29, 20, 28],
   [29, 21, 30, 20], [22, 34, 23, 33], [24, 35, 25, 36]]
\end{verbatim}
}

\noindent
Also, here is the PD code for the knot in
Figure~\ref{fig:eg1_symm_union_without_labels}.

{\footnotesize
\begin{verbatim}
  [[7, 37, 8, 36], [40, 34, 41, 33], [31, 13, 32, 12], [8, 16, 9, 15],
   [14, 6, 15, 5], [35, 7, 36, 6], [32, 40, 33, 39], [26, 47, 27, 48],
   [46, 19, 47, 20], [41, 24, 42, 25], [44, 3, 45, 4], [25, 20, 26, 21],
   [48, 27, 1, 28], [2, 43, 3, 44], [23, 4, 24, 5], [38, 12, 39, 11],
   [42, 45, 43, 46], [21, 28, 22, 29], [10, 30, 11, 29], [37, 31, 38, 30],
   [13, 35, 14, 34], [16, 10, 17, 9], [17, 22, 18, 23], [18, 1, 19, 2]]
\end{verbatim}
}

\noindent
The PD codes for the rest of the knots in Table~\ref{tab:HFK counter-examples} are in \cite{AncillaryFiles}.

%
%
%
\bibliographystyle{alphaurl}
\bibliography{bibliography}

\end{document}